\renewcommand{\varphi}{\phi}
\renewcommand{\to}{\supset}
\newcommand{\fillBox}{\hfill$\Box$}
\newcommand{\seq}{\Rightarrow }
\newcommand{\rn}[1]{\mathsf{#1}}
\newcommand{\ern}[1]{\rn{#1}_\mathsf{E}}
\newcommand{\irn}[1]{\rn{#1}_\mathsf{I}}
\newcommand{\base}[1]{\mathscr{#1}}
\newcommand{\baseB}{\base{B}}
\newcommand{\baseC}{\base{C}}
\newcommand{\baseD}{\base{D}}
\newcommand{\baseE}{\base{E}}
\newcommand{\baseX}{\base{X}}
\newcommand{\At}{\mathbb{A}}
\newcommand{\Worlds}{\mathbb{W}}
\newcommand{\Basic}{\mathbb{B}}
\newcommand{\baseGeq}{\supseteq}
\newcommand{\deriveBaseIML}[1]{\vdash_{\!\!#1}^{ \!\! \gamma}}
\newcommand{\rel}{\mathfrak{R}}
\newcommand{\suppIML}[1]{\Vdash_{ \!\!#1 }^{ \!\! \gamma}}
\newcommand{\suppM}[2]{\Vdash_{ \!\!#1 }^{ \!\!#2 }}
\newcommand{\flatIML}[1]{({#1})^{\flat}}
\newcommand{\deflatIML}[1]{({#1})^{\natural}}
\newcommand{\deflatILL}[1]{{#1}^{\natural}}
\DeclareMathSymbol{\msetsum}{\mathrel}{bbold}{\lq\,}
\def\labelandtag#1#2{\begingroup
   \def\@currentlabel{#2}%
   \phantomsection\label{#1}\endgroup
}
\def\descriptionlabel#1{\hspace\labelsep \upshape #1}
\let\orgdescriptionlabel\descriptionlabel
\renewcommand*{\descriptionlabel}[1]{%
  \let\orglabel\label
  \let\label\@gobble
  \phantomsection
  \edef\@currentlabel{#1}%
  \let\label\orglabel
  \orgdescriptionlabel{(#1)}%
}
\begin{document}

\title{Base-extension Semantics for \\ Intuitionistic Modal Logics}
\titlerunning{B-eS for Intuitionistic Modal Logics}
% If the paper title is too long for the running head, you can set
% an abbreviated paper title here
%
\author{Yll Buzoku\inst{1}\orcidID{0009-0006-9478-6009} 
\and  
David J. Pym\inst{1,2}\orcidID{0000-0002-6504-5838}}
\authorrunning{Yll Buzoku \and David Pym}
% First names are abbreviated in the running head.
% If there are more than two authors, 'et al.' is used.
%
\institute{University College London, London WC1E 6BT, UK \and
Institute of Philosophy, University of London,  London WC1H 0AR, UK
\email{\{y.buzoku@ucl.ac.uk,d.pym\}@ucl.ac.uk}}
\maketitle     

\begin{abstract}
The proof theory and semantics of intuitionistic modal logics have been studied by Simpson in terms of Prawitz-style labelled natural deduction systems and Kripke models. An alternative to model-theoretic semantics is provided by proof-theoretic semantics, which is a logical realization of inferentialism, in which the meaning of constructs is understood through their use. The key idea in proof-theoretic semantics is that of a base of atomic rules, all of which refer only to propositional atoms and involve no logical connectives. A specific form of proof-theoretic semantics, known as base-extension semantics (B-eS), is concerned with the validity of formulae and provides a direct counterpart to Kripke models that is grounded in the provability of atomic formulae in a base. We establish, systematically, B-eS for Simpson's intuitionistic modal logics and, also systematically, obtain soundness and completeness theorems with respect to Simpson's natural deduction systems. 

\keywords{inferentialism \and 
proof-theoretic semantics \and 
base-extension semantics \and 
intuitionistic modal logics \and
labelled natural deduction}
\end{abstract}

\section{Introduction} \label{sec:introduction}

Simpson \cite{simpson1994Thesis} introduces a family of natural deduction systems for a range of intuitionistic modal logics (iMLs). These natural deduction systems are shown to be both sound and complete with respect to meta-logical derivations in the intuitionistic first-order meta-theory that is assumed. This result is shown by defining a bidirectional translation of modal formulae to intuitionistic first-order formulae and showing that theorems of the natural deduction systems, under the translation, correspond to a particular type of (intuitionistic) first-order derivation. Simpson also defines a Kripke semantics for his intuitionistic modal logics based on the interpretation of first-order expressions in the Kripke semantics of first-order intuitionistic logic. The resulting modal models are then used to establish the soundness and completeness of his natural deduction systems. 

Proof-theoretic semantics is an alternative conception of a semantic theory that is grounded in the philosophical position known as inferentialism, in which the meanings of language constructs are considered to be determined by their use. In proof-theoretic semantics, which may be seen as a logical realization of the inferentialist position, meaning is given 
to a logical language through proof, rather than truth. This idea has deep roots in the works of Wittgenstein, with his principle that the meaning of a word should be reflected in its  use~\cite{Wittgenstein1953-WITPI-4}, but also more recently in works such as those of Dummett~\cite{Dummett_LogicalBasis_1991} and Brandom~\cite{brandom_MakingItExplicit,brandom_ArticulatingReasons,brandom_ReasonsForLogic}. 

Proof-theoretic semantics can usefully be seen as having two main branches. One, proof-theoretic validity (P-tV) is concerned --- in a sense articulated by Prawitz \cite{Prawitz1971ideas}, Dummett \cite{Dummett_LogicalBasis_1991}, and Schroeder-Heister \cite{Schroeder-Heister1991-SCHUPS,Schroeder2007modelvsproof,Piecha2015failure,Piecha_CompletenessInPtS_2016} --- with what constitutes a valid proof. The other, base-extension semantics (B-eS) is concerned with what constitutes a valid formula and is the focus of this paper: we provide a uniform B-eS for the iMLs studied by Simpson~\cite{simpson1994Thesis}.  

Sandqvist \cite{Tor2015} obtains a sound and complete B-eS for intuitionistic propositional logic (IPL). This is achieved by defining the concept of an atomic rule as an inference figure such as $\mathcal{R}$ in Figure~\ref{fig:B-eS-IPL} (which we may write linearly as $((P_1 \Rightarrow q_1) , \ldots , (P_n \Rightarrow q_n)) \Rightarrow r)$), and then defining a relation of atomic derivability $\vdash_{\baseB}$ over sets of atomic rules $\baseB$, between sets of atoms and atoms. Crucially, the rules make mention only of atoms and the relation of atomic derivability, as the name suggests, is thus capable only of observing judgements between sets of atoms and individual atoms. One can imagine the $\baseB$ as being a natural deduction like construct whose elements are not schemas, but, in fact, instances of rules. Thus, in the same way that $\rm NJ$ can be viewed as generating the consequence relation $\vdash_{\rm NJ}$, the relation $\vdash_\baseB$ can be understood as the consequence relation generated by $\baseB$. Sandqvist then conservatively extends this relation to a relation of support $\suppM{\baseB}{}$ over a base $\baseB$ which relates sets of IPL formulae to an IPL formula. 
\begin{figure}[ht]
\hrule 
\vspace{2mm}
\[
\frac{\begin{array}{ccc}
        [P_1] &        & [P_n] \\
        q_1   & \ldots & q_n  
      \end{array}
}{r} \, \mathcal{R}
\qquad 
{\small\begin{array}{rl}
\mbox{(Ref)} & \mbox{$P , p \vdash_\baseB p$} \\ 
(\mbox{App}_\mathcal{R}) & \mbox{if $((P_1 \Rightarrow q_1) , \ldots , (P_n \Rightarrow q_n)) \Rightarrow r)$ and,} \\
    & \mbox{for all $i \in [1,n]$, $P , P_i \vdash_\baseB q_i$, 
    then $P \vdash_\baseB r$} 
\end{array}}
\]
\[{\small
\begin{array}{rl@{\quad}rl} 
\mbox{(At)} & \mbox{for atomic $p$, $\Vdash_\baseB p$ iff $\vdash_\baseB p$} & 
    (\vee) & \mbox{$\Vdash_\baseB \phi \!\vee\! \psi$ iff, for every atomic $p$ and every } \\ 
    & & & \mbox{$\baseC \!\supseteq\! \baseB$, if $\phi \Vdash_\baseC p$ and $\psi \Vdash_\baseC p$, then $\Vdash_\baseC p$}\\
(\supset) & \mbox{$\Vdash_\baseB \phi \supset \psi$ iff $\phi \Vdash_\baseB \psi$} & (\bot) & 
    \mbox{$\Vdash_\baseB \bot$ iff, for all atomic $p$, $\Vdash_\baseB p$} \\
(\wedge) & \mbox{$\Vdash_\baseB \phi \wedge \psi$ iff $\Vdash_\baseB \phi$ and 
    $\Vdash_\baseB \psi$} & \mbox{(Inf)} & \mbox{for $\Theta \neq \emptyset$, 
        $\Theta \Vdash_\baseB \phi$ iff, for every $\baseC \supseteq \baseB$,}  \\ 
    & & & \mbox{if $\Vdash_\baseC \theta$, for every $\theta \in \Theta$, then $\Vdash_\baseC \phi$}     
\end{array}}
\] \vspace{-4mm}
\caption{Sandqvist's B-eS for Intuitionistic Propositional Logic}
\label{fig:B-eS-IPL} 
\vspace{2mm}
\hrule
\end{figure}  
%\noindent The form of the clause for disjunction in Figure~\ref{fig:B-eS-IPL} can be understood either as following the second-order definition of $\vee$ or, more directly, as following the form of the elimination rule in NJ (e.g., \cite{Prawitz_NatDed_2006,Pym-Ritter-Robinson_CategoricalP-tS_2025,GP2025}). 

The need to consider base extensions, of the form $\baseC \supseteq \baseB$, in entailments defined by $\Vdash_{\baseB}$ is analogous to the need to consider accessible worlds in Kripke models of intuitionistic implication. As Prawitz \cite{Prawitz1971ideas} 
explains, we wish the semantics to yield a construction of an implication $\phi \supset \psi$, say, as a construction of $\psi$ from $\phi$ that, together with a construction of $\phi$, yields a construction of $\psi$. For a fixed base $\baseB$, the condition as formulated above would be vacuously satisfied in the absence of a construction of $\phi$ relative to $\baseB$. This would be a rather weak condition and, unsurprisingly, the semantics requires that all base-extensions $\baseC \supseteq \baseB$ be used. See also \cite{GP2025} to explore further some related issues.  

It is a support relation of this form that provides our main semantic tool in giving a uniform B-eS for iMLs. 

The semantics as defined in Figure~\ref{fig:B-eS-IPL} is sound and complete for IPL. In this framework, an IPL sequent $(\Gamma:\varphi)$ is deemed valid iff for all bases $\baseB$, we have that $\Gamma\suppM{\baseB}{}\varphi$ (where $\Gamma$ is a set of IPL formulae and $\varphi$ is an IPL formula). This is somewhat analogous to what one might expect in a Kripke-like semantics, where one would say something like $\varphi$ is valid iff for all models $\mathcal{M}$ we have that $\mathcal{M}\vDash\varphi$. It is crucial, though, that bases are \emph{not} the same as models and that this difference is easily observed in the fact that the meanings of the connectives --- that is, the semantic clauses --- are very different in some cases from the familiar Kripke semantics for IPL. For example, 
the form of the clause for disjunction in Figure~\ref{fig:B-eS-IPL} can be understood either as following the second-order definition of $\vee$ or, perhaps more proof-theoretically, as following the form of the $\vee$-elimination rule in NJ (see, e.g., \cite{Prawitz_NatDed_2006,Pym-Ritter-Robinson_CategoricalP-tS_2025,GP2025}) --- cf. also Beth's semantics; for example, \cite{LS86} provides an illuminating discussion. That said, 
the treatment of $\bot$ follows, for example, Dummett \cite{Dummett_LogicalBasis_1991}. 
Sandqvist proves, in \cite{Tor2015}, soundness and completeness of the B-eS relative to NJ \cite{Prawitz_NatDed_2006}. We sketch the arguments below. 

\begin{theorem}[IPL Soundness] \label{thm:ipl-sound}
If $\Gamma \vdash_{\rm NJ} \phi$, then $\Gamma \Vdash \phi$. \fillBox
\end{theorem}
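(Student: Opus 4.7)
The plan is to proceed by induction on the structure of the natural deduction derivation witnessing $\Gamma \vdash_{\rm NJ} \phi$, verifying for each $\rm NJ$ rule that whenever its premises are valid in B-eS over every base $\baseB$, so is its conclusion. A preliminary lemma to prove first is monotonicity: if $\Gamma \Vdash_\baseB \phi$ and $\baseC \supseteq \baseB$, then $\Gamma \Vdash_\baseC \phi$. This is essentially built into clause (Inf), but must be backed up by a corresponding monotonicity property for the atomic derivability relation $\vdash_\baseB$, which is routine from (Ref) and $(\mbox{App}_\mathcal{R})$.

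The base case (the assumption rule $\phi \vdash_{\rm NJ} \phi$) is immediate from (Inf). The cases for $\wedge$-introduction and $\wedge$-elimination follow directly from clause $(\wedge)$, and the cases for $\supset$ from clause $(\supset)$ together with (Inf) and monotonicity. For $\vee$-introduction, given $\Gamma \Vdash_\baseB \phi$, fix any $\baseC \supseteq \baseB$ and atom $p$ with $\phi \Vdash_\baseC p$; monotonicity yields $\Gamma \Vdash_\baseC \phi$, after which (Inf) combined with $\phi \Vdash_\baseC p$ gives $\Gamma \Vdash_\baseC p$, whence $\Gamma \Vdash_\baseB \phi \vee \psi$ by clause $(\vee)$.

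The main obstacles are the elimination rules for $\vee$ and $\bot$, since clauses $(\vee)$ and $(\bot)$ speak only of \emph{atomic} consequents. For $\vee$-elimination we must, given $\Gamma \Vdash \phi \vee \psi$, $\Gamma, \phi \Vdash \chi$, and $\Gamma, \psi \Vdash \chi$, conclude $\Gamma \Vdash \chi$ for an arbitrary formula $\chi$. The standard remedy is a generalised disjunction-elimination lemma that lifts the atomic-consequent condition in clause $(\vee)$ to arbitrary consequents: one shows, by a secondary induction on $\chi$, that whenever $\phi \Vdash_\baseC \chi$ and $\psi \Vdash_\baseC \chi$ for every $\baseC \supseteq \baseB$ and $\Vdash_\baseB \phi \vee \psi$, then $\Vdash_\baseB \chi$. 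The subinduction unpacks the principal connective of $\chi$ via its semantic clause and ultimately appeals to the atomic case underwritten by $(\vee)$. A parallel lemma handles $\bot$-elimination, lifting clause $(\bot)$ from atoms to compound $\chi$.

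Once these two lifting lemmas are in place, the remaining $\rm NJ$ rules slot into the inductive scheme without difficulty, and since the argument is uniform over $\baseB$ we conclude $\Gamma \Vdash \phi$. The real work is concentrated in the lifting lemma for $\vee$ (and its counterpart for $\bot$), which is where the second-order flavour of Sandqvist's clauses for disjunction and falsum must be shown to cooperate with compound consequents; every other clause is harmonious enough with the corresponding $\rm NJ$ rule that the induction is essentially bookkeeping.
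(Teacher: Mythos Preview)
Your proposal is correct and follows essentially the same approach as the paper: induction on the $\rm NJ$ derivation, verifying that each rule preserves validity, with the key observation that $\vee$-elimination and $\bot$-elimination require a secondary induction on the structure of the consequent $\chi$ to lift the atomic case in clauses $(\vee)$ and $(\bot)$ to arbitrary formulae. The paper's sketch for IPL is terse, but its detailed modal soundness proof (Section~\ref{sec:soundness}) confirms exactly this pattern, carrying out the sub-induction on $\chi$ explicitly for the $(\lor\text{E})$, $(\bot\text{E})$, and $(\Diamond\text{E})$ cases.
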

In this case, by the inductive definition of $\vdash_{\rm NJ}$, it is sufficient to consider simply every rule of $\rm NJ$ as a series of meta-logical expressions with us assuming the hypothesis of every rule is valid and showing that the conclusion of every rule is also valid. For example, for $\irn \to$, we show that if $\Gamma,\varphi\Vdash_{\emptyset}\psi$,  then $\Gamma\Vdash_{\emptyset}\varphi\to\psi$ (note, using the base $\emptyset$ is equivalent to the validity condition mentioned above).

\begin{theorem}[IPL Completeness] \label{thm:ipl-complete} If $\Gamma \Vdash \phi$, then $\Gamma  \vdash_{\rm NJ} \phi$. \fillBox
\end{theorem}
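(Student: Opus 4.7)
The plan is to follow Sandqvist's canonical-base strategy. Fix a finite $\Gamma = \{\gamma_1, \ldots, \gamma_k\}$ and a formula $\phi$, and let $X$ be the (finite) set of all subformulae of $\Gamma \cup \{\phi\}$. For each $\psi \in X$ introduce a distinct atom $\flat\psi$, with $\flat p = p$ for atomic $p$. Construct a canonical base $\base{N}_X$ whose atomic rules faithfully simulate, on this flattened vocabulary, the introduction and elimination rules of NJ restricted to $X$. For instance, for each $\psi_1 \wedge \psi_2 \in X$ include both the introduction rule $(\flat\psi_1, \flat\psi_2) \Rightarrow \flat(\psi_1 \wedge \psi_2)$ and the two elimination rules $\flat(\psi_1 \wedge \psi_2) \Rightarrow \flat\psi_i$; for each $\psi_1 \supset \psi_2 \in X$ include the discharging rule $((\flat\psi_1 \Rightarrow \flat\psi_2)) \Rightarrow \flat(\psi_1 \supset \psi_2)$ together with a matching modus-ponens rule; and analogously for $\vee$ and $\bot$, where the elimination rule for $\vee$ is taken in the schematic form required to match clause $(\vee)$ in Figure~\ref{fig:B-eS-IPL}.

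The technical core is a \emph{flattening lemma}: for every $\psi \in X$ and every base $\baseC \supseteq \base{N}_X$,
\[
\Vdash_{\baseC} \psi \quad \Longleftrightarrow \quad \vdash_{\baseC} \flat\psi.
\]
This is proved by induction on the structure of $\psi$. The atomic case is immediate from clause (At); the conjunction and implication cases are routine, using the simulating rules of $\base{N}_X$ together with the clauses $(\wedge)$ and $(\supset)$ and the monotonicity of $\vdash_{\baseC}$ in $\baseC$. The main obstacle is the clause for $\vee$ (and, analogously, for $\bot$), whose semantic definition quantifies over all base extensions and all conclusion atoms. The standard device is, in one direction, to instantiate the witness atom $p$ in clause $(\vee)$ as $\flat(\psi_1 \vee \psi_2)$ itself and to invoke the simulated $\vee$-introduction rules in $\base{N}_X$ together with the inductive hypothesis; in the other direction, one uses the atomic $\vee$-elimination rule built into $\base{N}_X$ to discharge the two hypotheses uniformly in the arbitrary conclusion atom. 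The $\bot$ case is handled analogously via the ex-falso-style atomic rules included in $\base{N}_X$.

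With the flattening lemma in hand, completeness is a short argument. Let $\base{N}_X^\Gamma$ denote the extension of $\base{N}_X$ by the atomic axioms $\Rightarrow \flat\gamma_i$ for $1 \le i \le k$. By construction $\vdash_{\base{N}_X^\Gamma} \flat\gamma_i$, so by the flattening lemma $\Vdash_{\base{N}_X^\Gamma} \gamma_i$ for each $i$. Since $\Gamma \Vdash \phi$, clause (Inf) applied to the base $\base{N}_X^\Gamma \supseteq \varnothing$ yields $\Vdash_{\base{N}_X^\Gamma} \phi$, and a further application of the flattening lemma gives $\vdash_{\base{N}_X^\Gamma} \flat\phi$. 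Finally, because every rule of $\base{N}_X^\Gamma$ is by construction a faithful atomic transcription of an NJ rule on $X$, any such atomic derivation translates stepwise into a genuine NJ derivation witnessing $\Gamma \vdash_{\rm NJ} \phi$. The principal technical difficulty of the whole proof is thus concentrated in the disjunction and falsity cases of the flattening lemma; everything else is bookkeeping.
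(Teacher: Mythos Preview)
Your proposal is correct and follows essentially the same approach as the paper (i.e., Sandqvist's canonical-base method): build a simulation base $\base{N}$ encoding the NJ rules on subformulae, establish a flattening lemma by induction on formula structure with the $\vee$ and $\bot$ cases as the crux, and then read back an NJ derivation from the resulting atomic derivation. The only cosmetic difference is that you discharge the hypotheses $\Gamma$ by adding axioms $\Rightarrow \flat\gamma_i$ to the base, whereas the paper instead invokes a separate lemma (Lemma~\ref{lem:basicInf}) to pass from $\Gamma^\flat \Vdash_{\base{N}} \flat\phi$ to $\Gamma^\flat \vdash_{\base{N}} \flat\phi$ directly; these two devices are interchangeable.
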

The proof of completeness is more involved, but amounts to showing that, for any valid sequent, we can always construct a base $\base{N}$ whose rules simulate $\rm NJ$. Since $\base{N}$ does not contain schemas, every ${\rm NJ}$ rule must be simulated for every subformula of the valid sequent. Since the valid sequent is indeed valid, we can therefore construct an $\rm NJ$ proof of the sequent using the rules of $\base{N}$, and then translate it faithfully to give us an $\rm NJ$ proof of the valid sequent.

For B-eS for iMLs, we take Simpson \cite{simpson1994Thesis} and Sandqvist \cite{Tor2015} as starting points and combine them to give B-eS for iMLs with natural deduction systems defined by Simpson in~\cite{simpson1994Thesis}. We do this in a very similar fashion to that described for IPL \cite{Tor2015}. Crucially, however, more organizational structure within the formulation of bases is necessary.

As mentioned above, Simpson \cite{simpson1994Thesis} shows that there is a translation, the standard translation, between intuitionistic first-order formulae and the modal formulae that he considers for his proof systems. The modal formulae would better be described as being `labelled' modal formulae, whose intuitive reading is as being the `world' at which that formula holds. Similarly, Simpson allows additionally for a special type of \emph{non-logical} object to be present in the hypothesis of any rules in his natural deduction systems; that of relations between the worlds. 

These objects are, in the inferentialist spirit of bases, inherently non-logical --- they simply describe which labels are related to one other and do not involve logical constants. Accordingly, we consider these objects alongside atomic propositions (to which we now also attach a label) to be `first-class citizens' in bases. Consequently, our notion of atomic rule can contain both world relations and labelled atoms. However, in keeping with the spirit of the natural deduction systems of Simpson, we do not generally allow for a rule to conclude such relations. That is to say, whereas it might be desirable to be able to directly express rules of the form $\mathcal{R}$, we can, and do, express them in form $\mathcal{R}'$ 
\[
 \infer[\mathcal{R}]{xRz}{xRy & yRz}  
\qquad
 \infer[\mathcal{R}']{p^v}{xRy & yRz & \deduce{p^v}{[xRz]}} 
\]
where $p^v$ is an atom. Doing so allows us to build bases whose proof structures mimic those of the natural deduction systems of Simpson. Thus, soundness and, especially, completeness are proven exactly as in \cite{Tor2015}; however, extra care is taken with respect to the modal connectives. 

Building on all this, we give a brief summary of the organization of this paper. Section~\ref{sec:IMLs} introduces natural deduction systems for intuitionistic modal logics in the sense of Simpson \cite{simpson1994Thesis}. Section~\ref{sec:BDS} introduces the notion of derivability in a base, extending the work of Sandqvist \cite{Tor2015} to include labelled atoms and relations on labels. This notion of derivability is extended to incorporate labelled formulae and to give a conception of a semantic theory through relational, or `frame', properties. From this grounding, a B-eS in the sense of Sandqvist \cite{Tor2015} is given for the 
family of modal logics described by the relational properties. Sections \ref{sec:soundness} and \ref{sec:completeness} then establish that this semantics is sound and complete with respect to the natural deduction systems given by Simpson for their respective logics --- some of the details of the proofs are deferred to \cite{full}. We conclude, in Section~\ref{sec:conclusion}, with a summary of our contribution and a brief discussion of directions for further research. 

In recent work by Gheorghiu, Gu, and Pym \cite{GGP2024} on inferentialist semantics for substructural logics --- and also work by Eckhardt and Pym \cite{EP1,EP2} on classical modal logics --- it has been shown that B-eS has significant potential as a modelling tool. As well as being a fundamental contribution in logic, demonstrating the scope of proof-theoretic semantics, the work presented herein adds to the collection of logics that are available to support that line of enquiry. 

\section{Intuitionistic Modal Logics} \label{sec:IMLs} 

We begin by fixing some definitions that are required throughout the paper. Discussions of these definitions can be found in Simpson's comprehensive `The Proof Theory and Semantics of Intuitionistic Modal Logics' \cite{simpson1994Thesis}.  

Fix a countable set of variables $\Worlds$ which we call labels, and propositional atoms $\At$.

\begin{definition}[Modal formulae]
Propositional modal formulae are defined by the %following
grammar:
$%\[
\quad \varphi,\psi ::= p\in \At\,|\,\varphi\land\psi\,|\,\varphi\lor\psi\,|\,\varphi\to\psi\,|\,\Box\varphi\,|\,\Diamond\varphi\,|\,\bot\,|\, \top 
$%\] 
\fillBox
\end{definition}

We enrich formulae to labelled formulae. The intuitive reading of such objects is that they express the `locale' at which the formula holds true. 

\begin{definition}[Labelled formula]
A labelled formula is an ordered pair $\langle x,\varphi\rangle$, which we write as $\varphi^x$, where $x\in \Worlds$ and $\varphi$ is a propositional modal formula. \fillBox    
\end{definition}

\begin{definition}[Sequent]
An intuitionistic modal sequent is an ordered pair $\langle\Gamma,\varphi^x\rangle$ which we write as  $(\Gamma:\varphi^x)$, where $\Gamma$ is a (finite) set of labelled formulae and $\varphi^x$ is a labelled formula. \fillBox
\end{definition}

% We now introduce a new type of object, that of the relational assumption, which we will treat similarly to propositional atoms.

We introduce relational assumptions, treated similarly to propositional atoms.

\begin{definition}[Relational assumption]
A relational assumption is an ordered pair $\langle x,y \rangle$ where $x,y\in \Worlds$, though we will write this as $xRy$. \fillBox
\end{definition}

% \begin{example}
% $x \rel y$ is a relational assumption 
% on the labels $x$ and $y$. \fillBox
% \end{example}

Relational assumptions act like edges of a directed graph. This intuition will be useful for understanding the proof theory of the intuitionistic modal logics. To make this precise, recall the definition of a graph.

\begin{definition}[Graph]
A graph is a pair $\mathcal{G} = (X,\rel)$ where $X\subseteq\Worlds$ is a non-empty set of labels and $\rel$ is a binary relation on $X$. We write $x\rel y$ to mean that $xRy\in\rel$. \fillBox
\end{definition}
%Graph unions and restrictions are defined as normal. 
%\begin{definition}[Graph morphism]
%    Given two graphs $\mathcal{G} = (X,\rel)$ and $\mathcal{G}' = (X',\rel')$, a graph morphism is a function $f:X\rightarrow X'$ such that $xRy\in\rel$ implies $f(x)Rf(y)\in\rel'$. \fillBox
%\end{definition}

Given a graph $\mathcal{G}=(X,\rel)$, we can require that the relations be subject to %many 
different global properties (i.e., properties on all $x\in X$) such as those in Figure~\ref{fig:frameConditions}. The properties in Figure~\ref{fig:frameConditions} are conventionally called \emph{frame conditions} in the context of modal logic and we adopt this terminology herein. We denote a (possibly empty) subset of these frame conditions and represent this by a letter $\gamma$, which should be understood as $\gamma\subseteq\{\gamma_D,\gamma_T,\gamma_B,\gamma_4,\gamma_5,\gamma_2\}$. By fixing such a $\gamma$, we %in effect 
fix a particular intuitionistic modal logic, with the empty subset $\gamma=\emptyset$ denoting the modal logic $\rm iK$. 

\begin{figure}[ht]
    \centering
    \hrule
\[{\small
\begin{array}{l@{\quad}r@{\quad}l@{\quad}l}
\mbox{Axiom Schema} & \mbox{Label} & \mbox{Name} & \mbox{Relational property} \\
 & & & \\ 
\mbox{$\Diamond \top$} &\mbox{$\gamma_D$} & \mbox{Seriality} & \mbox{$\forall x.\,\exists y.\,xRy$} \\
\mbox{$\Box\varphi\to\varphi$} &\mbox{$\gamma_T$} & \mbox{Reflexivity} & \mbox{$\forall x.\,xRx$} \\
\mbox{$\varphi\to\Box\Diamond\varphi$} &\mbox{$\gamma_B$} & \mbox{Symmetry} & \mbox{$\forall x.\,\forall y.\,xRy \Rightarrow yRx$} \\
\mbox{$\Box\varphi\to\Box\Box\varphi$} &\mbox{$\gamma_4$} & \mbox{Transitivity} & \mbox{$\forall x.\,\forall y.\,\forall z.\,xRy \;\&\; yRz \Rightarrow xRz$} \\
\mbox{$\Diamond\varphi\to\Box\Diamond\varphi$} &\mbox{$\gamma_5$} & \mbox{Euclidean} & \mbox{$\forall x.\,\forall y.\,\forall z.\,xRy \;\&\; xRz \Rightarrow yRz$} \\
\mbox{$\Diamond\Box\varphi\to\Box\Diamond\varphi$} &\mbox{$\gamma_2$} & \mbox{Directed} & \mbox{$\forall x.\,\forall y.\,\forall z.\,xRy\; \&\; yRz \Rightarrow \exists w.\,yRw \; \&\; zRw$} 
\end{array}}
\] \vspace{-6mm}
    \caption{Frame conditions} \vspace{2mm}
    \label{fig:frameConditions}
    \hrule
\end{figure}

\begin{definition}[Intuitionistic modal derivability]\label{def:modalDerivability}
    Given a set of frame conditions $\gamma$, a sequent $(\Gamma:\varphi^x)$, and a graph $\mathcal{G}=(X,\rel)$ whose vertex set $X$ contains at least every label of the elements of $\Gamma$ and $x$, whose edge-set $\rel$ satisfies the conditions $\gamma$, then the relation of derivability for the specific intuitionistic modal logic satisfying the modal axioms corresponding to the frame conditions $\gamma$ is defined by the schemas of Figure~\ref{fig:NatDedIML} and the schemas corresponding to the frame conditions of Figure~\ref{fig:NatDedProperties}. The resulting consequence relation is written $\Gamma\vdash^\gamma_\mathcal{G}\varphi^x$. \fillBox
\end{definition}

The schemas of Figure~\ref{fig:NatDedIML} form the base natural deduction system $\rm N_{\Box\Diamond}$, which corresponds to a natural deduction system for the logic $\rm iK$. We extend this natural deduction system to other intuitionistic modal logics by adding to $\rm N_{\Box\Diamond}$ additional inference figures from Figure~\ref{fig:NatDedProperties}, which represent the necessary relational properties of those logics. That this is justified is shown by Simpson in~\cite{simpson1994Thesis}. We write $\rm N_{\Box\Diamond}(\gamma)$ to denote the natural deduction system $N_{\Box\Diamond}$ extended by the rules that correspond to the frame conditions $\gamma$. Thus, the consequence relation $\Gamma\vdash_{\mathcal{G}}^\gamma\varphi^x$ should be read as saying that there is a derivation in $N_{\Box\Diamond}(\gamma)$ of $\varphi^x$ from open assumptions $\Gamma$ and $\rel$, where $\mathcal{G}=(X,\rel)$, recalling that $X=\{x\,|\,\psi^x\in\Gamma\cup\{\varphi^x\}\}$. Of interest is the trivial graph: $\tau = (x,\emptyset)$. 

The soundness and completeness of the natural deduction systems $N_{\Box\Diamond}(\gamma)$ with respect to their intuitionistic modal logics is proven in~\cite{simpson1994Thesis}. 

\begin{definition}[Theorem of $\rm N_{\Box\Diamond}(\gamma)$]
    A labelled formula $\varphi^x$ is called a theorem of $\rm N_{\Box\Diamond}(\gamma)$ if $\vdash^\gamma_\tau\varphi^x$ holds. We write this as $\vdash^\gamma\varphi^x$. \fillBox
\end{definition}

%With these preliminaries established, we can now set up the B-eS.

\begin{figure}[t]
    \hrule \vspace{-1mm}
    \[{\small 
    \begin{array}{c@{\qquad}c} 
        % Constants
        \infer[\irn \top]{~~\top^x~~}{} & \infer[\ern \bot]{\varphi^y}{\bot^x} \\[3mm]
        % Implication
        \infer[\irn \to]{(\varphi \to \psi)^x}{\deduce{\psi^x}{[\varphi^x]}} & 
        \infer[\ern \to]{\psi^x}{(\varphi \to \psi)^x & \varphi^x} \\[3mm]
        % Conjunction
        \infer[\irn \land]{(\varphi\land\psi)^x}{\varphi^x & \psi^x} & 
        \infer[\ern{\land_1}]{\varphi^x}{(\varphi \land \psi)^x} \quad \infer[\ern{ \land_2}]{\psi^x}{(\varphi \land \psi)^x} \\[3mm] 
        % Disjunction
        \infer[\irn{\lor_1}]{(\varphi\lor\psi)^x}{\varphi^x}\quad \infer[\irn{\lor_2}]{(\varphi\lor\psi)^x}{\psi^x} & 
        \infer[\ern \lor]{\chi^y}{(\varphi\lor\psi)^x & \deduce{\chi^y}{[\varphi^x]} & \deduce{\chi^y}{[\psi^x]}}\\[3mm]
        % Box rules
        \infer[\irn \Box^*]{(\Box\varphi)^x}{\deduce{\varphi^y}{[xRy]}}&
        \infer[\ern \Box]{\varphi^y}{(\Box\varphi)^x & xRy}\\[3mm]
        % Diamond rules
        \infer[\irn \Diamond]{(\Diamond\varphi)^x}{\varphi^y & xRy}&
        \infer[\ern \Diamond^{**}]{\psi^z}{(\Diamond\varphi)^x & \deduce{\psi^z}{[\varphi^y]\,[xRy]}}\\[3mm]
        % Comments
        *\text{ The label $y$ is different to $x$ and} & **\text{ The label $y$ is different to $x$ and $z$}\\
        \text{the labels of any open assumptions.} & \text{ and the labels of any open assumptions.}
    \end{array}}
    \] \vspace{-5mm}
     \caption{The natural deduction system $\rm N_{\Box\Diamond}$ for intuitionistic modal logic $\rm iK$} \vspace{2mm}
    \hrule
    \label{fig:NatDedIML}
\end{figure}

\begin{figure}[ht]
    \hrule
    \[{\small
    \begin{array}{c@{\qquad}c}
        % D and T
        \infer[(R_D)^*]{\varphi^z}{\deduce{\varphi^z}{[xRy]}} & 
        \infer[(R_T)]{\varphi^y}{\deduce{\varphi^y}{[xRx]}} \\[3mm]
        % B and 4
        \infer[(R_B)]{\varphi^z}{xRy & \deduce{\varphi^z}{[yRx]}} & 
        \infer[(R_4)]{\varphi^w}{xRy & yRz & \deduce{\varphi^w}{[xRz]}} \\[3mm] 
        % 5 and 2
        \infer[(R_5)]{\varphi^w}{xRy & xRz & \deduce{\varphi^w}{[yRz]}} &
        \infer[(R_2)^{**}]{\varphi^v}{xRy & xRz & \deduce{\varphi^v}{[yRw]\,[zRw]}} \\[3mm] 
        % Comments
        *\text{ The label $y$ is different to $x$ and} & **\text{ The label $w$ is different to $v,x,y,z$}\\
        \text{the labels of any open assumptions.} & \text{ and the labels of any open assumptions.}
    \end{array}}
    \] \vspace{-4mm}
    \caption{Rules which extend $\rm N_{\Box\Diamond}$ that express the properties of the relational assumptions} \vspace{2mm}
    \hrule
    \label{fig:NatDedProperties}
\end{figure}

\section{Derivability in a Base and Base-extension Semantics} \label{sec:BDS}

Following the approach of Sandqvist \cite{Tor2015}, we start by defining a notion of atomic derivability, which is the basis of the base-extension semantics.

\begin{definition}[Basic sentence]
    A basic sentence is either a labelled propositional letter or a relational assumption. % on labels. 
    $\Basic$ denotes the set of all basic sentences. \fillBox
\end{definition}

For the remainder of this paper, we adopt the convention that lower case Latin letters refer to basic sentences, except for the variable letters $v,w,x,y,z,a,b$. Lower case Greek letters, except for $\gamma$, will refer to intuitionistic modal formulae and uppercase Latin and Greek letters, except for the relation symbol $R$, will refer to finite sets thereof. A basic sentence written without a superscript is taken to mean \emph{either} a labelled propositional letter \emph{or} a relational assumption. We also henceforth use the word atom to mean labelled \emph{atom}. 
%\emph{propositional letter}.

\begin{definition}[Basic sequent]
    A basic sequent is an ordered pair $\langle P,p\rangle$ where $P$ is a (finite) set of basic sentences and $p$ is an % labelled 
    atom if $P\neq\emptyset$. Else $p$ is a basic sentence. We write this pair as $P\seq p$ and omit the $P$ if it is empty. \fillBox
\end{definition}

\begin{definition}[Basic rule]
    A basic rule is an ordered pair $\langle\mathbf{Q},r\rangle$ where $\mathbf{Q}$ is a (finite) set of basic sequents and $r$ is a labelled atom if $\mathbf{Q}\neq\emptyset$. Else, $r$ is a basic sentence. We write this as $(P_1\seq p_1, \dots, P_n\seq p_n)\seq r$ where each $P_i\seq p_i\in \mathbf{Q}$ and omit the $(P_1\seq p_1, \dots, P_n\seq p_n)$ if $\mathbf{Q}$ is empty. \fillBox
\end{definition}

\begin{definition}[Base]
    A base is a set of basic rules. \fillBox
\end{definition}

\begin{definition}[Basic derivability relation]
    Given a set of frame conditions $\gamma$ and a base $\baseB$, we can inductively define a relation of derivability on basic sequents as follows:
    \[
    \begin{array}{r@{\quad}l}
    \mbox{\rm (Ref)} & \mbox{$S, p \deriveBaseIML{\baseB}p$} \\[1mm]
    \mbox{\rm (App)} & \mbox{If $((P_1\seq p_1, \dots, P_n\seq p_n)\seq r)\in \baseB$ and} \\
    & \mbox{\quad$S,P_i\deriveBaseIML{\baseB}p_i$ for each $i$, then $S\deriveBaseIML{\baseB}r$}\\[1mm]
    \end{array} 
    \]
    \[
    \begin{array}{r@{\quad}l}
    \mbox{\rm (D)} & \mbox{If $\gamma_D\in\gamma$ and there exists a y such that  $S,xRy\deriveBaseIML{\baseB}p^z$, then $S\deriveBaseIML{\baseB}p^z$} \\[1mm]
    \mbox{\rm (T)} & \mbox{If $\gamma_T\in\gamma$ and $S,xRx\deriveBaseIML{\baseB}p^y$, then $S\deriveBaseIML{\baseB}p^y$}\\[1mm]
    \mbox{\rm (B)} & \mbox{If $\gamma_B\in\gamma$, $S\deriveBaseIML{\baseB}xRy$ and $S,yRx\deriveBaseIML{\baseB}p^z$, then $S\deriveBaseIML{\baseB}p^z$}\\[1mm]
     \end{array} 
    \]
    \[
    \begin{array}{r@{\quad}l}
    \mbox{\rm (4)} & \mbox{If $\gamma_4\in\gamma$, $S\deriveBaseIML{\baseB}xRy$, $S\deriveBaseIML{\baseB}yRz$, and $S,xRz\deriveBaseIML{\baseB}p^w$, then $S\deriveBaseIML{\baseB}p^w$}\\[1mm]
    \mbox{\rm (5)} & \mbox{If $\gamma_5\in\gamma$, $S\deriveBaseIML{\baseB}xRy$, $S\deriveBaseIML{\baseB}xRz$, and $S,yRz\deriveBaseIML{\baseB}p^w$, then $S\deriveBaseIML{\baseB}p^w$}\\[1mm]
    \mbox{\rm (2)} & \mbox{If $\gamma_2\in\gamma$, $S\deriveBaseIML{\baseB}xRy$, $S\deriveBaseIML{\baseB}xRz$, and there exists a $w$ such that}\\
    & \quad\mbox{$S,yRw,zRw\deriveBaseIML{\baseB}p^v$, then $S\deriveBaseIML{\baseB}p^v$}. 
    \end{array} 
    \] 
The last six cases we call the \emph{modal cases} of the derivability relation.
In the case of $\rm (D)$, the label $y$ cannot appear as the label of any $s\in S$ nor be equal to $x$ or $z$. In the case of $(2)$, the label $w$ cannot appear as the label of any $s\in S$ nor be equal to $x,y,z,v$.   \fillBox
\end{definition}

\begin{lemma}\label{lem:basicWeakening}
    If $U\deriveBaseIML{\baseB}u$, then $T,U\deriveBaseIML{\baseB}u$. 
\end{lemma}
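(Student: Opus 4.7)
The proof proceeds by induction on the structure (equivalently, the height) of the derivation witnessing $U \deriveBaseIML{\baseB} u$. I treat one case per clause of the definition of $\deriveBaseIML{\baseB}$.

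For (Ref), we have $U = S', p$ and $u = p$ for some $S'$, and then $T, U = T, S', p$ so (Ref) reapplies directly. For (App), if the last step uses a rule $((P_1 \seq p_1, \ldots, P_n \seq p_n) \seq r) \in \baseB$ with subderivations of $U, P_i \deriveBaseIML{\baseB} p_i$, the inductive hypothesis gives $T, U, P_i \deriveBaseIML{\baseB} p_i$ for each $i$, and the same instance of (App) concludes $T, U \deriveBaseIML{\baseB} r$. The modal cases without freshness conditions, namely (T), (B), (4), and (5), are equally routine: apply the inductive hypothesis to each premise (noting that the relational premises of the form $S \deriveBaseIML{\baseB} xRy$ remain valid under weakening by IH) and reapply the same rule.

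The delicate cases are (D) and (2), whose side conditions demand that the label introduced by the rule ($y$ in (D), $w$ in (2)) be distinct from the labels occurring in the context and in the conclusion. If that label happens already to occur in $T$, a direct appeal to the inductive hypothesis followed by the rule is blocked. The remedy is to alpha-rename the offending label in the subderivation to a label that is fresh for $S \cup T \cup \{u\}$; since $\Worlds$ is countable and the derivation is finite, such a label exists. Once renamed, the inductive hypothesis applies and the modal rule fires with its freshness side condition intact.

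Formally, the renaming step should be justified by an auxiliary lemma: if $S \deriveBaseIML{\baseB} q$ and $y'$ does not occur in the derivation, then the derivation obtained by replacing every free occurrence of $y$ by $y'$ witnesses $S[y'/y] \deriveBaseIML{\baseB} q[y'/y]$, where on atoms $p^y$ the substitution acts on the label. This lemma is itself a straightforward induction on derivations. The main obstacle of the weakening proof is thus not the weakening step itself but making explicit this label-freshness management in (D) and (2); everything else is a direct induction by reapplication of the rule last used. \fillBox
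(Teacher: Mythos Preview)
Your proof follows the same inductive scheme as the paper's, but you are in fact more careful than the paper on the modal cases. The paper's proof simply asserts that ``in the modal cases, we are immediately done'' and then spells out only (Ref) and (App); it does not mention the freshness side-conditions on the existentially bound labels in (D) and (2). Your observation that the witness label may clash with labels in $T$, and that an $\alpha$-renaming of that label (justified by a substitution lemma on derivations) is needed before the modal rule can be reapplied to the weakened context, is exactly the point the paper glosses over. So your argument is the more rigorous version of what the paper intends; the overall strategy is identical.
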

\begin{proof}
    We start noting that, in the modal cases, we are immediately done. So now consider when $U\deriveBaseIML{\baseB}u$ holds by (Ref), in which case $u \in U$. Therefore, $u\in T,U$ and thus $T,U\deriveBaseIML{\baseB}u$ holds again by (Ref). Otherwise, consider when $U\deriveBaseIML{\baseB}u^x$ holds by (App). In this case, there must exist a rule $((P_1\seq p_1, \dots, P_n\seq p_n)\seq u)\in \baseB$ such that $U,P_i\deriveBaseIML{\baseB}p_i$ holds for each $i$. By the inductive hypothesis, we therefore obtain that $T,U,P_i\deriveBaseIML{\baseB}p_i$ for each $i$, from which we conclude by (App) that $T,U\deriveBaseIML{\baseB}u$. \fillBox
\end{proof}

\begin{lemma}\label{lem:basicInf}
    $T\deriveBaseIML{\baseB}u$ if and only if, for all $\baseC\baseGeq\baseB$ such that if, for all $t\in T$, we have $\deriveBaseIML{\baseC}t$, then $\deriveBaseIML{\baseC}u$.
\end{lemma}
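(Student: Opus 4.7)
The plan is to prove the two directions separately, using two preparatory facts for $(\Rightarrow)$ and a base-enrichment construction for $(\Leftarrow)$.

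For $(\Rightarrow)$, I would first establish two auxiliary lemmas. \emph{Monotonicity}: if $T \deriveBaseIML{\baseB} u$ and $\baseC \baseGeq \baseB$, then $T \deriveBaseIML{\baseC} u$; this is a routine induction on the derivation, since every basic rule of $\baseB$ is available in $\baseC$ and the modal clauses depend only on $\gamma$, not on the base. \emph{Cut}: if $T, t \deriveBaseIML{\baseC} u$ and $T \deriveBaseIML{\baseC} t$, then $T \deriveBaseIML{\baseC} u$; I would prove this by induction on the derivation of $T, t \deriveBaseIML{\baseC} u$, handling (Ref) by case-splitting on whether $u = t$ or $u \in T$, (App) by invoking the induction hypothesis on sub-derivations, and each modal case analogously---Lemma~\ref{lem:basicWeakening} promotes $T \deriveBaseIML{\baseC} t$ into any larger context as required, and the freshness side-conditions transfer unproblematically from the larger context $T, t$ to the smaller $T$. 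From $T \deriveBaseIML{\baseB} u$, monotonicity then gives $T \deriveBaseIML{\baseC} u$, and iterating cut against each $\deriveBaseIML{\baseC} t$ yields $\deriveBaseIML{\baseC} u$.

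For $(\Leftarrow)$, the plan is to construct the enriched base $\baseC = \baseB \cup \{\seq t \mid t \in T\}$. Each rule $\seq t$ is well formed since its premise set is empty, so its conclusion is permitted to be an arbitrary basic sentence. Plainly $\baseC \baseGeq \baseB$, and (App) applied to $\seq t$ gives $\deriveBaseIML{\baseC} t$ for every $t \in T$. The hypothesis then yields $\deriveBaseIML{\baseC} u$. It remains to prove a \emph{translation} lemma: for any $S$ and $s$, if $S \deriveBaseIML{\baseC} s$ then $S, T \deriveBaseIML{\baseB} s$; instantiating at $S = \emptyset$ and $s = u$ delivers $T \deriveBaseIML{\baseB} u$. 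The induction on the $\baseC$-derivation handles (Ref) via Lemma~\ref{lem:basicWeakening}, splits (App) into the cases where the applied rule lies in $\baseB$ (reassemble from the induction hypothesis) or is one of the new $\seq t$ (in which case $s = t \in T$ and (Ref) applies), and passes the modal cases through by the induction hypothesis.

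The main obstacle will be the freshness side-conditions of the modal rules (D) and (2) in the translation step: the auxiliary label chosen by an application of (D) or (2) is guaranteed fresh only with respect to $S$, but to reapply the same rule in the target $\baseB$-derivation with premises $S, T$ it must also avoid the labels occurring in $T$. I would address this via a preliminary renaming observation---basic derivability is invariant under bijective renaming of labels that do not occur in the context or in the conclusion---which lets me assume without loss of generality that every auxiliary label introduced by (D) or (2) in the given $\baseC$-derivation is drawn from an infinite supply disjoint from $T$. With this normalisation in place, the induction goes through.
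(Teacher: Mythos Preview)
Your proposal is correct. The $(\Leftarrow)$ direction is essentially identical to the paper's: both enrich the base to $\baseB \cup \{\seq t \mid t \in T\}$, obtain $\deriveBaseIML{\baseC} u$ there, and then prove the same translation lemma by induction on the $\baseC$-derivation; the paper likewise handles the freshness obstacle in the modal cases by a renaming remark (``possibly by renaming the variable $y$''), just less explicitly than you do.

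Where you differ is in $(\Rightarrow)$. The paper argues by a single direct induction on the derivation of $T \deriveBaseIML{\baseB} u$: in the (Ref) case $u \in T$ so $\deriveBaseIML{\baseC} u$ is among the assumptions, and in the (App) case the inductive hypothesis applied to each premiss $T, P_i \deriveBaseIML{\baseB} p_i$ yields $P_i \deriveBaseIML{\baseC} p_i$, whence (App) over the same rule (now in $\baseC$) gives $\deriveBaseIML{\baseC} u$. You instead factor the argument through two auxiliary results, monotonicity and an admissibility-of-cut lemma, and then iterate cut over the finitely many $t \in T$. Your route is a little more work up front but packages cut as a reusable fact; the paper's route is more economical for this particular lemma, avoiding a separate cut induction. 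Both are sound and arrive at the same conclusion.
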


\begin{proof}
Going left to right, we suppose the hypothesis and consider how $T\deriveBaseIML{\baseB}u$ holds. We consider just the non-modal cases as the modal cases are immediate. If it holds by (Ref), then $u\in T$ and thus we are immediately done. Else, it must hold by (App) in which case we have that there exists a rule $((P_1\seq p_1, \dots, P_n\seq p_n)\seq u)\in \baseB$ and $T,P_i\deriveBaseIML{\baseB}p_i$ for each $i$. By the inductive hypothesis, we therefore have that $P_i\deriveBaseIML{\baseC}p_i$ for each $i$, from which it follows by (App) that $\deriveBaseIML{\baseC}u$.

Going right to left, we consider whether $u\in T$ or not. If it is, then we immediately have by (Ref) that $T\deriveBaseIML{\baseB}u$ and we are done. So suppose not. We begin this case by making the observation that if for any base $\baseX$ we have that $Q\deriveBaseIML{\baseX}q$ holds by (Ref) then it must be the case that $Q\deriveBaseIML{\baseB}q$ and by Lemma~\ref{lem:basicWeakening} we have that $T,Q\deriveBaseIML{\baseB}q$. 

Now consider the base $\base{S} = \baseB \cup \{\seq t \,|\, t \in T\} \baseGeq \baseB$. We deduce that $\deriveBaseIML{\base{S}}u$ holds and that it must hold by (App) or if $u$ is a labelled formula it can hold by a modal case. Starting with (App), since $u \notin T$, there must be a rule concluding $u$ in $\baseB$. Let this rule be $(P_1\seq p_1, \dots, P_n\seq p_n)\seq u$. Thus, we have by (App) that $P_i\deriveBaseIML{\base{S}}p_i$ for all $i$. Thus, by the inductive hypothesis, taking our initial observation as our base case, we have that $T,P_i\deriveBaseIML{\baseB}p_i$ from which, by (App), we conclude $T\deriveBaseIML{\baseB}u$, as required. 

We now show just one of the modal cases --- all the others go similarly. Suppose $u=p^x$ and that we have that $\deriveBaseIML{\base{S}}p^x$ holds by ${\rm (D)}$. Thus, we have that $\gamma_D\in\gamma$ and that there exists a $y$ such that $xRy\deriveBaseIML{\base{S}}p^x$. By the inductive hypothesis, possibly by renaming the variable $y$, we therefore have that there exists a $y$ such that $T,xRy\deriveBaseIML{\baseB}p^x$, and by ${\rm (D)}$ we obtain $T\deriveBaseIML{\baseB}p^x$, as required. \fillBox
\end{proof}

\begin{lemma}[Monotonicity]
    If $S\deriveBaseIML{\baseB}p$, then, for all $\baseC\baseGeq\baseB$, %we have that 
    $S\deriveBaseIML{\baseC}p$. \fillBox
\end{lemma}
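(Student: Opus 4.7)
The plan is a straightforward induction on the structure of the derivation witnessing $S \deriveBaseIML{\baseB} p$, arguing that every rule used to conclude something at $\baseB$ is also available at any extension $\baseC \baseGeq \baseB$. Fix $\baseC \baseGeq \baseB$ at the outset; the induction is then uniform in this $\baseC$.

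For the base case (Ref), we have $p \in S$, and (Ref) makes no reference to the base, so $S \deriveBaseIML{\baseC} p$ follows immediately. For the inductive case (App), the derivation uses a rule $((P_1 \seq p_1, \ldots, P_n \seq p_n) \seq r) \in \baseB$ together with subderivations $S, P_i \deriveBaseIML{\baseB} p_i$. Since $\baseC \baseGeq \baseB$, the same rule lies in $\baseC$; by the inductive hypothesis, each $S, P_i \deriveBaseIML{\baseC} p_i$ holds; applying (App) in $\baseC$ gives $S \deriveBaseIML{\baseC} r$.

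The six modal cases are all handled uniformly: each reduces the conclusion $S \deriveBaseIML{\baseB} p^{\bullet}$ to one or more derivations of the form $S \deriveBaseIML{\baseB} xRy$ (asserting edges) and one derivation with additional relational hypotheses, $S, \ldots \deriveBaseIML{\baseB} p^{\bullet}$. In each case the frame-condition side-clause ($\gamma_D \in \gamma$, etc.) is a condition on $\gamma$, not on the base, so it transfers to $\baseC$ verbatim. The inductive hypothesis upgrades every premise derivation from $\baseB$ to $\baseC$, and the same modal clause then concludes $S \deriveBaseIML{\baseC} p^{\bullet}$. The freshness side-conditions in (D) and (2) — namely, that the introduced label $y$ (respectively $w$) is distinct from those appearing in $S$ and in the conclusion — depend only on $S$ and the conclusion, not on the base, so they continue to hold, possibly after a harmless renaming of the bound label as in the proof of Lemma~\ref{lem:basicInf}.

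I do not anticipate any real obstacle: the lemma is in effect a bookkeeping result stating that $\deriveBaseIML{(-)}$ is monotone in its base parameter, and the only mildly subtle point is to notice that the freshness conditions in the modal cases and the frame-condition membership conditions are base-independent. No further lemmas beyond the inductive definition itself are required.
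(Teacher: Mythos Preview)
Your proposal is correct and follows exactly the approach the paper indicates: the paper merely says it suffices to consider how $S\deriveBaseIML{\baseB}p$ holds and that extending the base preserves the derivation, which is precisely the induction on the derivation you spell out. Your treatment of the (App) case (the rule persists since $\baseB\subseteq\baseC$) and of the modal cases (frame-condition and freshness side-conditions are base-independent) is the intended unpacking of the paper's one-line remark; the renaming caveat is in fact unnecessary here, since the freshness constraints mention only $S$ and the conclusion labels, which are unchanged.
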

To prove this, it suffices to consider how $S\deriveBaseIML{\baseB}p$ holds. Intuitively, we understand that by extending our base, the original atomic derivation still remains a valid argument for $p$ from $S$.

With these preliminary notions established, we can extend the notion of basic derivability to give a support relation, in the sense of Sandqvist \cite{Tor2015}. First, must extend the notion of sequent to allow for relational assumptions or, in the presence of no assumptions, concluding a relation.

\begin{definition}[Extended sequent]
    An extended sequent is a pair $(\Gamma,\varphi)$ that we write $(\Gamma:\varphi)$ where $\Gamma$ is a set of intuitionistic modal formulae and relational assumptions and $\varphi$ is an intuitionistic modal formula or, if $\Gamma$ is empty, also possibly a relational assumption. \fillBox
\end{definition}

The support relation, Definition~\ref{def:support} is core to the base-extension semantics. 

\begin{definition}[Support] \label{def:support}
    Given a set of frame conditions $\gamma$ and a base $\baseB$, we can inductively define a relation of support on extended sequents as follows:
\[
\begin{array}{lrcl}
\mbox{\rm (At)} & \mbox{$\suppIML{\baseB}p^x$} & \;\mbox{iff}\; & 
\mbox{$\deriveBaseIML{\baseB}p^x$} \\ 
\mbox{\rm (Rel)} & \mbox{$\suppIML{\baseB}xRy$} & \;\mbox{iff}\; & \mbox{$\deriveBaseIML{\baseB}xRy$} \\ 
\mbox{$(\land)$} & \mbox{$\suppIML{\baseB}(\varphi\land\psi)^x$} & \;\mbox{iff}\; & \mbox{$\suppIML{\baseB}\varphi^x$ and $\suppIML{\baseB}\psi^x$} \\ 
\mbox{$(\lor)$} & \mbox{$\suppIML{\baseB}(\varphi\lor\psi)^x$} & \;\mbox{iff}\; & \mbox{for all $\baseC\baseGeq\baseB$ and $p^z$,} \\ 
& & & \mbox{if $\varphi^x\suppIML{\baseC}p^z$ and $\psi^x\suppIML{\baseC}p^z$, then $\suppIML{\baseC}p^z$} 
\end{array}
\]
\[
\begin{array}{lrcl}
\mbox{$(\to)$} & \mbox{$\suppIML{\baseB}(\varphi\to\psi)^x$} & \;\mbox{iff}\; & \mbox{$\varphi^x\suppIML{\baseB}\psi^x$} \\ 
\mbox{$(\Box)$} & \mbox{$\suppIML{\baseB}(\Box\varphi)^x$} & \;\mbox{iff}\; & \mbox{$xRy\suppIML{\baseB}\varphi^y$, for all labels $y$} \\ 
\mbox{$(\Diamond)$} & \mbox{$\suppIML{\baseB}(\Diamond\varphi)^x$} & \;\mbox{iff}\; & \mbox{for all $\baseC\baseGeq\baseB$ and $p^z$,} \\ 
& & & \mbox{if $xRy,\varphi^y\suppIML{\baseC}p^z$ for all labels $y$, then $\suppIML{\baseC}p^z$} 
\end{array}
\]
\[
\begin{array}{lrcl} 
\mbox{$(\bot)$} & \mbox{$\suppIML{\baseB}\bot^x$} & \;\mbox{iff}\; & \mbox{$\suppIML{\baseB}p^z$, for all $p^z$}\\
\mbox{$(\top)$} & \mbox{$\suppIML{\baseB}\top^x$} &  & \mbox{always}\\
\mbox{\rm (Inf)} & \mbox{$\Gamma\suppIML{\baseB}\varphi$} & \;\mbox{iff}\; & \mbox{for all $\baseC\baseGeq\baseB$ such that $\suppIML{\baseC}\psi$, for each $\psi\in\Gamma$,} \\  & & & \mbox{implies $\suppIML{\baseC}\varphi$}
\end{array} \vspace{-5mm}
\]
\fillBox
\end{definition}

\begin{definition}[Validity]
    The sequent $(\Gamma:\varphi^x)$ is said to be valid if and only if for all bases $\baseB$, it is the case that $\Gamma\suppIML{\baseB}\varphi^x$ holds. \fillBox
\end{definition}
We start with an ostensibly useful observation.
\begin{lemma}
    The sequent $(\Gamma:\varphi^x)$ is valid if and only if $\Gamma\suppIML{\emptyset}\varphi^x$
\end{lemma}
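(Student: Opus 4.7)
The plan is to prove each direction separately, with the forward direction being essentially trivial and the reverse direction relying on the interplay between the (Inf) clause and the monotonicity of the underlying notions.

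For the forward direction, I would simply unpack the definition of validity: if $(\Gamma : \varphi^x)$ is valid, then $\Gamma \suppIML{\baseB} \varphi^x$ holds for every base $\baseB$, and instantiating this with $\baseB = \emptyset$ yields $\Gamma \suppIML{\emptyset} \varphi^x$ directly.

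For the reverse direction, I would split on whether $\Gamma$ is empty. When $\Gamma \neq \emptyset$, the proof is a short application of (Inf). Unfolding $\Gamma \suppIML{\emptyset} \varphi^x$ via (Inf) gives: for every $\baseC \baseGeq \emptyset$, if $\suppIML{\baseC} \psi$ for each $\psi \in \Gamma$, then $\suppIML{\baseC} \varphi^x$. Since every base $\baseB$ satisfies $\baseB \baseGeq \emptyset$, fixing any $\baseB$ and any $\baseC \baseGeq \baseB$ we still have $\baseC \baseGeq \emptyset$, so the hypothesis applies and we conclude $\Gamma \suppIML{\baseB} \varphi^x$ by (Inf) again.

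The harder case is $\Gamma = \emptyset$, where (Inf) does not apply (its statement is implicitly restricted to non-empty contexts, paralleling the IPL clause from Figure~\ref{fig:B-eS-IPL}). Here I would invoke a monotonicity property for the support relation: if $\suppIML{\baseB} \varphi^x$ and $\baseC \baseGeq \baseB$, then $\suppIML{\baseC} \varphi^x$. Applied with $\baseB = \emptyset$ this yields $\suppIML{\baseC} \varphi^x$ for every base $\baseC$, which is exactly the validity condition. The monotonicity lemma itself is proved by induction on the structure of $\varphi$: the atomic and relational cases use the Monotonicity Lemma for $\deriveBaseIML{\baseB}$ already established; the $(\lor)$, $(\Diamond)$, and $(\to)$ cases follow because those clauses either explicitly quantify over all further base-extensions or unfold through (Inf), which does so; the $(\land)$ case is direct from the induction hypothesis on each conjunct; and $(\bot)$ reduces to the atomic case uniformly in all $p^z$, while $(\top)$ is trivial. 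The main obstacle is simply ensuring this monotonicity result for support is available, since the excerpt only records monotonicity of basic derivability; but the inductive argument is routine once the base-extension quantifier in (Inf) is observed to propagate through the defining clauses.
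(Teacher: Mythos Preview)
Your proposal is correct and follows essentially the same idea as the paper: the forward direction instantiates at $\baseB=\emptyset$, and the reverse direction uses that support is monotone in the base (since $\emptyset$ is contained in every base). The paper's proof is a terse two-line appeal to ``monotonicity'' without further justification; your version is more careful in that you separate the non-empty $\Gamma$ case (where monotonicity in $\baseB$ is immediate from the base-extension quantifier already built into (Inf)) from the empty $\Gamma$ case (where one actually needs a structural induction on $\varphi$ to lift the Monotonicity Lemma for $\deriveBaseIML{\baseB}$ to support), and you sketch that induction---the only minor omission being the $(\Box)$ case, which is handled exactly as you describe $(\to)$, since it too unfolds through (Inf).
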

\begin{proof}
    Going left to right, we have that for all bases $\baseB$, it is the case that $\Gamma\suppIML{\baseB}\varphi^x$. Thus, we can pick $\baseB=\emptyset$. Going right to left, we obtain the result by monotonicity as the empty base is the smallest subset of every base. \fillBox
\end{proof}

So, we are justified in writing a valid sequent as $\Gamma\suppM{}{\,\gamma}\varphi^x$. Before moving on, a brief discussion of the clause for $\Diamond$ is in order. One would perhaps wish to see a clause for $\Diamond$ that says something along the lines of $\suppIML{\baseB}(\Diamond\varphi)^x$ iff $\exists y (\suppIML{\baseB}xRy$ and $\suppIML{\baseB}\varphi^y)$. Doing so, however, quickly presents problems in proving both soundness and completeness. If we understand the presence of the existential as an infinite, meta-level disjunction, then we can understand this problem as being of the same kind faced by Sandqvist \cite{Tor2015} in trying to define disjunction. In fact, the clause for $\Diamond$ amounts to an infinitary version of Sandqvist's clause for disjunction. 

More specifically, suppose one gives a na\"{\i}ve Kripke-like definition for intuitionistic disjunction; that is,  $\Vdash_{\baseB}\varphi\lor\psi\text{ iff }\Vdash_{\baseB}\varphi\text{ or }\Vdash_{\baseB}\psi$. One could instead write this definition as $\exists\chi\in\Delta$ s.t. $\Vdash_{\baseB}\chi$, where $\Delta=\{\varphi,\psi\}$. This definition, as shown by de Campos Sanz and Piecha in~\cite{CamposSanz2014-CAMACR-5}, doesn't work. 

Instead, the clause $\forall \baseC\baseGeq\baseB,\forall p,\forall\chi\in\Delta$, $\chi\Vdash_{\baseC}p$ implies $ \Vdash_{\baseC}p$ --- corresponding to the second-order definition of $\vee$ or to the elimination rule in NJ --- does (again, see \cite{GP2025} for more discussion). Something similar follows for $\Diamond$. If one attempts to give $\Diamond$ a Kripke-like definition, that is, $\suppIML{\baseB}(\Diamond\varphi)^x$ iff $\exists y$ s.t.  $(\suppIML{\baseB}xRy$ and $\suppIML{\baseB}\varphi^y)$, then, similarly to before, this definition fails to be sound and complete. Instead, as demonstrated below, the clause $\forall \baseC\baseGeq\baseB,\forall p^z,\forall y\in\Worlds(\varphi^y,xRy\Vdash_{\baseC}p^z)$ implies $ \Vdash_{\baseC}p^z$, again based on the elimination rule, is sound and complete. We note that, in both cases, we have taken definitions that are meta-level disjunctive in the Kripke semantics, and turned them around into meta-level conjunctive in their proof-theoretic semantics.

\section{Soundness} \label{sec:soundness}

The soundness of the base-extension semantics we have given with respect to natural deduction system $\rm N_{\Box\Diamond}(\gamma)$ is stated
as follows:
\[
 \mbox{if $\Gamma\vdash^{\,\gamma}\varphi^x$, then $\Gamma\suppM{}{\,\gamma}\varphi^x$.}
\]
By the inductive definition of derivability in $\rm N_{\Box\Diamond}(\gamma)$, it suffices to show the following:
    \begin{itemize}
        \item[$(\text{R})$] $\Gamma,\varphi^x\suppM{}{\gamma}\varphi^x$
        \item[$(\land\text{I})$] If $\Gamma\suppM{}{\gamma}\varphi^x$ and $\Gamma\suppM{}{\gamma}\psi^x$, then $\Gamma\suppM{}{\gamma}(\varphi\land\psi)^x$
        \item[$(\land\text{E})$] If $\Gamma\suppM{}{\gamma}(\varphi\land\psi)^x$, then $\Gamma\suppM{}{\gamma}\varphi^x$ and $\Gamma\suppM{}{\gamma}\psi^x$
        \item[$(\lor\text{I})$] If $\Gamma\suppM{}{\gamma}\varphi^x$ or $\Gamma\suppM{}{\gamma}\psi^x$, then $\Gamma\suppM{}{\gamma}(\varphi\lor\psi)^x$
        \item[$(\lor\text{E})$] If $\Gamma\suppM{}{\gamma}(\varphi\lor\psi)^x$ and $\Gamma,\varphi^x\suppM{}{\gamma}\chi^y$ and $\Gamma,\psi^x\suppM{}{\gamma}\chi^y$, then $\Gamma\suppM{}{\gamma}\chi^y$
        \item[$(\to\text{I})$] If $\Gamma,\varphi^x\suppM{}{\gamma}\psi^x$, then $\Gamma\suppM{}{\gamma}(\varphi\to\psi)^x$
        \item[$(\to\text{E})$] If $\Gamma\suppM{}{\gamma}(\varphi\to\psi)^x$ and $\Gamma\suppM{}{\gamma}\varphi^x$, then $\Gamma\suppM{}{\gamma}\psi^x$
        \item[$(\bot\text{E})$] If $\Gamma\suppM{}{\gamma}\bot^x$, then $\Gamma\suppM{}{\gamma}\chi^y$
        \item[$(\Box\,\text{I})$] If $\Gamma,xRy\suppM{}{\gamma}\varphi^y$ for $y$ different to $x$ and not a label of any element of $\Gamma$, then $\Gamma\suppM{}{\gamma}(\Box\varphi)^x$.
        \item[$(\Box\,\text{E})$] If $\Gamma\suppM{}{\gamma}(\Box\varphi)^x$ and $\Gamma\suppM{}{\gamma}xRy$, then $\Gamma\suppM{}{\gamma}\varphi^y$ 
        \item[$(\Diamond\text{I})$] If $\Gamma\suppM{}{\gamma}\varphi^v$ and $\Gamma\suppM{}{\gamma}xRv$, then $\Gamma\suppM{}{\gamma}(\Diamond \varphi)^x$
        \item[$(\Diamond\text{E})$] If $\Gamma\suppM{}{\gamma}(\Diamond\varphi)^x$ and $\Gamma,\varphi^a,xRa\suppM{}{\gamma}\psi^z$ for $a$ different to $x$ and $z$ and not a label of any element of $\Gamma$, then $\Gamma\suppM{}{\gamma}\psi^z$.
    \end{itemize}
    Additionally, if $\gamma\neq\emptyset$ we must show the following:
    \begin{itemize}
        \item[$(R_D)$] If $\gamma_D\in\gamma$ and $\Gamma,xRa\suppM{}{\gamma}\varphi^z$, for $a$ different to $x$ and $z$ and not a label of any element of $\Gamma$, then $\Gamma\suppM{}{\gamma}\varphi^z$
        \item[$(R_T)$] If $\gamma_T\in\gamma$ and $\Gamma,xRx\suppM{}{\gamma}\varphi^y$ then $\Gamma\suppM{}{\gamma}\varphi^y$
        \item[$(R_B)$] If $\gamma_B\in\gamma$ and $\Gamma\suppM{}{\gamma}xRy$ and $\Gamma,yRx\suppM{}{\gamma}\varphi^z$ then $\Gamma\suppM{}{\gamma}\varphi^z$
        \item[$(R_4)$] If $\gamma_4\in\gamma$ and $\Gamma \suppM{}{\gamma} xRy$ and $\Gamma\suppM{}{\gamma}yRz$ and $\Gamma,xRz\suppM{}{\gamma}\varphi^w$ then $\Gamma\suppM{}{\gamma}\varphi^w$
        \item[$(R_5)$] If $\gamma_5\in\gamma$ and $\Gamma \suppM{}{\gamma} xRy$ and $\Gamma\suppM{}{\gamma}xRz$ and $\Gamma,yRz\suppM{}{\gamma}\varphi^w$ then $\Gamma\suppM{}{\gamma}\varphi^w$
        \item[$(R_2)$] If $\gamma_2\in\gamma$ and $\Gamma \suppM{}{\gamma} xRy$ and $\Gamma\suppM{}{\gamma}xRz$ and $\Gamma,yRw,zRw\suppM{}{\gamma}\varphi^v$, for $w$ different to $x,y,z,v$ and not a label of any element of $\Gamma$, then $\Gamma\suppM{}{\gamma}\varphi^v$.
    \end{itemize}

\begin{theorem}[Soundness]
    If $\Gamma\vdash^{\gamma}\varphi^x$, then $\Gamma\suppM{}{\,\gamma}\varphi^x$.
\end{theorem}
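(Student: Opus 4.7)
The plan is to proceed by induction on the height of the derivation of $\Gamma \vdash^\gamma \varphi^x$ in $N_{\Box\Diamond}(\gamma)$, verifying for each rule the corresponding closure property of the validity relation listed in the bulleted catalogue above. The main toolkit is (Inf), Monotonicity, and Lemma~\ref{lem:basicInf}, together with the observation that validity amounts to $\Gamma \suppIML{\emptyset} \varphi^x$ (so we may always pass through $\suppIML{\baseC}$ at an arbitrary base $\baseC \baseGeq \emptyset$).

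I would dispose of the easy cases first. (R) is immediate from (Inf). The $\wedge$ and $\to$ cases follow directly by unfolding clauses $(\wedge)$ and $(\to)$ together with (Inf); $\vee$-introduction reduces to an application of (Inf) plus the universally quantified $(\vee)$-clause. Among the modal cases, $(\Box\text{E})$ and $(\Diamond\text{I})$ only require instantiating the universally quantified label in clause $(\Box)$ or $(\Diamond)$ and then closing by (Inf). The cases $(\Box\text{I})$ and $(\Diamond\text{E})$ additionally require a label-renaming step: the inductive hypothesis is established for one specific fresh label, but clauses $(\Box)$ and $(\Diamond)$ quantify universally over labels, and the freshness side-conditions of the rules are precisely what licences the substitution. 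The six frame-condition cases $(R_D), \ldots, (R_2)$ are then handled uniformly: fix a base $\baseC \baseGeq \emptyset$ supporting $\Gamma$, apply the inductive hypotheses to produce the relational premises in basic-derivability form (via (Rel)), and invoke the matching clause (D), (T), (B), (4), (5), or (2) of the basic derivability relation. The fresh-label subtlety recurs for $(R_D)$ and $(R_2)$ and is dealt with exactly as for $(\Box\text{I})$ and $(\Diamond\text{E})$.

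The main obstacle is the cases whose conclusion is an arbitrary labelled formula but whose associated semantic clause is formulated only over \emph{atomic} conclusions: $(\vee\text{E})$, $(\bot\text{E})$, and especially $(\Diamond\text{E})$. Following Sandqvist's treatment of $\vee$ in the propositional setting, my plan is to establish auxiliary ``generalised elimination'' lemmas lifting the atomic conclusion in each clause to an arbitrary labelled formula $\chi^z$. For instance, the generalised $(\Diamond)$ lemma would read: if $\suppIML{\baseB}(\Diamond\varphi)^x$ and, for every $\baseC \baseGeq \baseB$ and every label $y$, we have $xRy, \varphi^y \suppIML{\baseC} \chi^z$, then $\suppIML{\baseB} \chi^z$. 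Each such lemma is proved by induction on the structure of $\chi$: the base case is the original clause, and each inductive step unfolds the support clause for the head connective of $\chi$ and reapplies the generalisation (passing to an arbitrary extension of the base where appropriate, so that the outer induction interlocks cleanly with Monotonicity). Once these generalised eliminations are in place, $(\vee\text{E})$, $(\bot\text{E})$, and $(\Diamond\text{E})$ reduce to direct instances, and the induction on derivations closes. The same generalisation also smooths out the non-atomic subcases of the frame-condition rules, so I would prove the lemmas once at the start of the soundness argument and draw on them uniformly thereafter.
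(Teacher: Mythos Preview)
Your proposal is correct and follows essentially the same route as the paper: a case analysis over the rules of $N_{\Box\Diamond}(\gamma)$, with the substantive work concentrated in $(\vee\text{E})$, $(\bot\text{E})$, $(\Diamond\text{E})$, and the frame-condition cases, each of which is discharged by a sub-induction on the structure of the conclusion formula. The only difference is organisational: the paper inlines the structural sub-inductions inside each rule case, whereas you propose to factor them out once as separate ``generalised elimination'' lemmas and reuse them; the mathematical content is the same.
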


\begin{proof}
   
    We proceed by proving each of the cases listed above, under the additional hypothesis that we are in an arbitrary base $\baseB$ such that for all $\theta\in\Gamma$ we have that $\suppIML{\baseB}\theta$.
    \begin{itemize}
        \item[$(\text{R})$] $\varphi^x\suppIML{\baseB}\varphi^x$. This case is immediate by (Inf).
        \item[$(\land\text{I})$] If $\suppIML{\baseB}\varphi^x$ and $\suppIML{\baseB}\psi^x$, then $\suppIML{\baseB}(\varphi\land\psi)^x$. This case follows immediately by the definition of $(\land)$.
        \item[$(\land\text{E})$] If $\suppIML{\baseB}(\varphi\land\psi)^x$, then $\suppIML{\baseB}\varphi^x$ and $\suppIML{\baseB}\psi^x$. This case also follows immediately by the definition of $(\land)$.
        \item[$(\lor\text{I})$] If $\suppIML{\baseB}\varphi^x$ or $\suppIML{\baseB}\psi^x$, then $\suppIML{\baseB}(\varphi\lor\psi)^x$. To show this case we suppose we have, for all $\baseC\baseGeq\baseB$ and basic sentences $p^y$, that $\varphi^x\suppIML{\baseC}p^y$ and $\psi^x\suppIML{\baseC}p^y$. We must show that $\suppIML{\baseC}p$. From the hypothesis, we have that $\suppIML{\baseB}\varphi^x$ or $\suppIML{\baseB}\psi^x$, which, by monotonicity, gives that $\suppIML{\baseC}\varphi^x$ or $\suppIML{\baseC}\psi^x$. In either case, by (Inf), we therefore obtain that $\suppIML{\baseC}p^y$, as required.
        \item[$(\lor\text{E})$] If $\suppIML{\baseB}(\varphi\lor\psi)^x$ and $\varphi^x\suppIML{\baseB}\chi^y$ and $\psi^x\suppIML{\baseB}\chi^y$, then $\suppIML{\baseB}\chi^y$. We proceed by an inductive argument over the structure of the labelled formula $\chi^y$.
            \begin{itemize}[leftmargin=*]
                \item[-]$\chi = p$. In this case, we have sufficient hypotheses by the definition of $(\lor)$ to conclude $\suppIML{\baseB}p^y$, as required.
                \item[-]$\chi = \alpha\land\beta$. In this case, by the definition of $(\land)$, we have that $\varphi^x\suppIML{\baseB}\alpha^y$ and $\varphi^x\suppIML{\baseB}\beta^y$, and similarly for  $\psi^x$. Thus, by the inductive hypothesis, we have that $\suppIML{\baseB}\alpha^y$ and $\suppIML{\baseB}\beta^y$, and so we are done.
                \item[-]$\chi = \alpha\lor\beta$. In this case, we start by further supposing that we are in a base $\baseC\baseGeq\baseB$ and that we have, for all atoms $p^z$, that $\alpha^y\suppIML{\baseC}p^z$ and $\beta^y\suppIML{\baseC}p^z$. Our goal will be to show that $\suppIML{\baseC}p^z$. We want to show that $\varphi^x\suppIML{\baseC}p$ and $\psi^x\suppIML{\baseC}p^z$, which will give us sufficient grounds to then use our first hypothesis to give us our conclusion. We will only show $\varphi^x\suppIML{\baseC}p^z$ as the argument is similar for the other case. We start by observing that we have, by hypothesis, $\varphi^x\suppIML{\baseB}(\alpha\lor\beta)^y$. By monotonicity, this is $\varphi^x\suppIML{\baseC}(\alpha\lor\beta)^y$. By (Inf), this gives that, for all $\baseD\baseGeq\baseC$, $\suppIML{\baseD}\varphi^x$ implies $\suppIML{\baseD}(\alpha\lor\beta)^y$. Since we have that $\alpha^y\suppIML{\baseC}p^z$ and $\beta^y\suppIML{\baseC}p^z$, by hypothesis, then we  conclude that we have, for all $\baseD\baseGeq\baseC$, if $\suppIML{\baseD}\varphi^x$, then $\suppIML{\baseD}p^z$. By (Inf), this gives $\varphi^x\suppIML{\baseC}p^z$, as required. 
                \item[-]$\chi = \alpha\to\beta$. In this case, we further suppose we are in a base $\baseC\baseGeq\baseB$ such that $\suppIML{\baseC}\alpha^y$. Our goal will be to show that $\suppIML{\baseC}\beta^y$. We start by considering the hypothesis $\varphi^x\suppIML{\baseB}(\alpha\to\beta)^y$. By monotonicity, we have that $\varphi^x\suppIML{\baseB}(\alpha\to\beta)^y$ and that, by (Inf), for all $\baseD\baseGeq\baseC$, if $\suppIML{\baseD}\varphi^x$, then $\suppIML{\baseD}(\alpha\to\beta)^y$. Since we have that $\suppIML{\baseC}\alpha^y$ by hypothesis, then, by monotonicity and (Inf), we conclude that we have for all $\baseD\baseGeq\baseC$, if $\suppIML{\baseD}\varphi^x$, then $\suppIML{\baseD}\beta^y$, which gives us $\varphi^x\suppIML{\baseC}\beta^y$. Similarly for $\psi^x$, at which point, by the inductive hypothesis, we can conclude that $\suppIML{\baseC}\beta^y$, as required.
                \item[-]$\chi = \bot$. This case is immediate by the definition of $\bot^x$.
                \item[-]$\chi=\Box \alpha$. Our goal will be to show that, $yRa\suppIML{\baseB}\alpha^a$ holds, for all labels $a$. With this in mind, we fix an arbitrary label $a$ and consider all bases $\baseC\baseGeq\baseB$, where $\suppIML{\baseC}yRa$ holds. We are left with showing $\suppIML{\baseC}\alpha^a$.
                To this end, consider the hypothesis $\varphi^x\suppIML{\baseB}(\Box\alpha)^y$. We immediately have that $\varphi^x\suppIML{\baseC}(\Box\alpha)^y$ by monotonicity. This is equivalent to considering all bases $\baseD\baseGeq\baseC$ for which $\suppIML{\baseD}\varphi^x$ implies $\suppIML{\baseD}(\Box\alpha)^y$. 
                The consequent of the previous implication is equivalent to considering all labels $b$ such that $yRb\suppIML{\baseD}\alpha^b$ which implies that in particular $yRa\suppIML{\baseD}\alpha^a$. Since, by hypothesis, we have that $\suppIML{\baseC}yRa$ and $\baseD\baseGeq\baseC$, then it is the case that $\suppIML{\baseD}\alpha^a$. It thus follows that $\varphi^x\suppIML{\baseC}\alpha^a$. The same argument gives that $\psi^x\suppIML{\baseC}\alpha^a$, so, by the inductive hypothesis, we can therefore obtain $\suppIML{\baseC}\alpha^a$, as required.
                \item[-]$\chi=\Diamond \alpha$. We start by fixing a base $\baseC\baseGeq\baseB$ and an atom $p^w$, where $yRz,\alpha^z\suppIML{\baseC}p^w$, for all $z$. We must show that $\suppIML{\baseC}p^w$. The second hypothesis, by monotonicity, gives $\varphi^x\suppIML{\baseC}(\Diamond\alpha)^y$. 
                This equivalently gives us that, for all bases $\baseD\baseGeq\baseC$, $\suppIML{\baseD}\varphi^x$ implies that, for all bases $\baseE\baseGeq\baseD$ and atoms $p^a$, if $yRz,\alpha^z\suppIML{\baseE}p^a$, for all $z$, then $\suppIML{\baseE}p^a$. Thus, we can conclude that $\suppIML{\baseD}p^w$ and therefore $\varphi^x\suppIML{\baseC}p^w$. Similarly, we obtain $\psi^x\suppIML{\baseC}p^w$, and so, by the first hypothesis, we conclude $\suppIML{\baseC}p^w$, as required.
            \end{itemize}
        \item[$(\to\text{I})$] If $\varphi^x\suppIML{\baseB}\psi^x$, then $\suppIML{\baseB}(\varphi\to\psi)^x$. This case follows immediately by the definition of $(\to)$.
        \item[$(\to\text{E})$] If $\suppIML{\baseB}(\varphi\to\psi)^x$ and $\suppIML{\baseB}\varphi^x$ then $\suppIML{\baseB}\psi^x$. In this case, the first hypothesis, by (Inf), is equivalent to saying that for all bases $\baseC\baseGeq\baseB$, if $\suppIML{\baseC}\varphi^x$ then $\suppIML{\baseC}\psi^x$. By hypothesis, we have that $\suppIML{\baseB}\varphi^x$ and thus it follows that $\suppIML{\baseB}\psi^x$, as required.
        \item[$(\bot\text{E})$] If $\suppIML{\baseB}\bot^x$, then $\suppIML{\baseB}\chi^y$. Since we have $\suppIML{\baseB}p^z$, for all atoms $p^z$, then by a simple inductive argument over the structure of $\chi^y$, we obtain $\suppIML{\baseB}\chi^y$, as required.
        \item[$(\Box\,\text{I})$] If $xRy\suppIML{\baseB}\varphi^y$ for $y$ different to $x$, then $\suppIML{\baseB}(\Box\varphi)^x$. To show the conclusion, it suffices, by the definition of $(\Box)$, to show that, for all $z$, we have $xRz\suppIML{\baseB}\varphi^z$. Thus, it is required to show that $xRv\suppIML{\baseB}\varphi^v$, for some $v$ not equal to $x$, which we have, by hypothesis, by setting $v=y$. 
        \item[$(\Box\,\text{E})$] If $\suppIML{\baseB}(\Box\varphi)^x$ and $\suppIML{\baseB}xRy$, then $\suppIML{\baseB}\varphi^y$. This case is immediate, since we consider the definition of $\suppIML{\baseB}(\Box\varphi)^x$ at the label $y$ to obtain the result.
        \item[$(\Diamond\text{I})$] If $\suppIML{\baseB}\varphi^v$ and $\suppIML{\baseB}xRv$, then $\suppIML{\baseB}(\Diamond \varphi)^x$. To show $\suppIML{\baseB}(\Diamond \varphi)^x$, we start by additionally fixing a $\baseC\baseGeq\baseB$ and a $p^w$ such that $xRy,\varphi^y\suppIML{\baseC}p^w$ holds for all $y$ and show that $\suppIML{\baseC}p^w$. We have, by monotonicity, that both $\suppIML{\baseC}\varphi^v$ and $\suppIML{\baseC}xRv$ hold. Furthermore, since $xRy,\varphi^y\suppIML{\baseC}p^w$ holds for all $y$, then in particular, we have that $xRv,\varphi^v\suppIML{\baseC}p^w$. Therefore, we conclude $\suppIML{\baseC}p^w$, as required.
        \item[$(\Diamond\text{E})$] If $\suppIML{\baseB}(\Diamond\varphi)^x$ and $\varphi^a,xRa\suppIML{\baseB}\psi^z$ for $a$ different to $x$ and $z$, then $\suppIML{\baseB}\psi^z$. We break the proof into a case analysis over the structure of $\psi^z$.
            \begin{itemize}[leftmargin=*]
                \item[-]$\psi = p$. We start by considering the second hypothesis. Since $a$ is a label different to $x$ and $z$ and didn't appear in any open assumptions, we conclude that $xRy,\varphi^y\suppIML{\baseB}p^z$ for all labels $y$. We thus have sufficient grounds to conclude, by the first hypothesis, that $\suppIML{\baseB}p^z$, as required.
                \item[-]$\psi = \alpha\to\beta$. In this case, we start by fixing a $\baseC\baseGeq\baseB$ such that $\suppIML{\baseC}\alpha^z$. We want to show that $\suppIML{\baseC}\beta^z$. By monotonicity, the second hypothesis therefore gives that $\varphi^a,xRa\suppIML{\baseC}\beta^z$ from which it follows that $\varphi^y,xRy\suppIML{\baseC}\beta^z$ for all labels $y$. Since, by monotonicity, we have that $\suppIML{\baseC}(\Diamond\varphi)^x$, by applying the inductive hypothesis, we therefore obtain that $\suppIML{\baseC}\beta^z$, as required.
                \item[-]$\psi = \alpha\land\beta$. In this case, the second hypothesis gives that $\varphi^a,xRa\suppIML{\baseB}(\alpha\land\beta)^z$, which equivalently gives $\varphi^a,xRa\suppIML{\baseB}\alpha^z$ and $\varphi^a,xRa\suppIML{\baseB}\beta^z$, and therefore, by the arbitrariness of $a$, that $\varphi^y,xRy\suppIML{\baseB}\alpha^z$ and $\varphi^y,xRy\suppIML{\baseB}\beta^z$, for all $y$. Thus, by the inductive hypothesis, we obtain that $\suppIML{\baseB}\alpha^z$ and $\suppIML{\baseB}\beta^z$, which gives $\suppIML{\baseB}(\alpha\land\beta)^z$, as required.
                \item[-]$\psi = \alpha\lor\beta$. In this case, further fix a base $\baseC\baseGeq\baseB$ and an atom $p^w$ such that $\alpha^z\suppIML{\baseC}p^w$ and $\beta^z\suppIML{\baseC}p^w$. We want to show that $\suppIML{\baseC}p^w$. By monotonicity, taking the second hypothesis is equivalent to considering all $\baseD\baseGeq\baseC$ such that $\suppIML{\baseD}\varphi^a$ and $\suppIML{\baseD}xRa$ implies $\suppIML{\baseD}(\alpha\lor\beta)^z$. By definition, this is equivalent to considering all bases $\baseE\baseGeq\baseD$ and atoms $p^w$ such that $\alpha^z\suppIML{\baseE}p^w$ and $\beta^z\suppIML{\baseE}p^w$ implies $\suppIML{\baseE}p^w$. Since we have that $\alpha^z\suppIML{\baseC}p^w$ and $\beta^z\suppIML{\baseC}p^w$, then we can conclude that we have $\suppIML{\baseD}p^w$. Thus we have that $xRa,\varphi^a\suppIML{\baseC}p^w$, which implies that $xRy,\varphi^y\suppIML{\baseC}p^w$, for all $y$. Hence, by the first hypothesis, we get $\suppIML{\baseC}p^w$, as required.
                \item[-]$\psi = \bot$. In this case, it suffices to show that $\suppIML{\baseB}p^b$, for an arbitrary atoms $p^b$. We note that the second hypothesis implies that $xRa,\varphi^a\suppIML{\baseC}p^b$ from which we conclude, by the arbitrariness of $a$, that $xRy,\varphi^y\suppIML{\baseB}p^w$, for all $y$. Hence, by the first hypothesis, we conclude $\suppIML{\baseB}p^b$, as required.
                \item[-]$\psi = \Box\alpha$. In this case, further fix a label $v$ and a base $\baseC\baseGeq\baseB$, such that $\suppIML{\baseC}zRv$. We are left to show that $\suppIML{\baseC}\alpha^v$. By monotonicity, the second hypothesis gives, for all $\baseD\baseGeq\baseC$, that $\suppIML{\baseD}xRa$ and $\suppIML{\baseD}\varphi^a$ implies that, for all $y$, we have $zRy\suppIML{\baseD}\alpha^y$. Therefore, in particular, we have that $zRv\suppIML{\baseD}\alpha^v$. Since $\suppIML{\baseC}zRv$, we therefore conclude that $xRa,\varphi^a\suppIML{\baseC}\alpha^v$. By the arbitrariness of $a$, we therefore have that $xRa,\varphi^a\suppIML{\baseC}\alpha^v$, for all $y$. Since, by monotonicity, we have that $\suppIML{\baseC}(\Diamond\varphi)^x$, by applying the inductive hypothesis, we therefore obtain $\suppIML{\baseC}\alpha^v$, as required.
                \item[-]$\psi = \Diamond\alpha$. In this case, further fix a base $\baseC\baseGeq\baseB$ and an atom $p^w$ for which $zRv, \varphi^v\suppIML{\baseC}p^w$ holds for all $v$. We want to show that $\suppIML{\baseC}p^w$. By monotonicity, the second hypothesis gives that, for all $\baseD\baseGeq\baseC$, $\suppIML{\baseD}\varphi^a$ and $\suppIML{\baseD}xRa$ implies $\suppIML{\baseD}(\Diamond\alpha)^z$. The conclusion of the previous implication is equivalent to considering all bases $\baseE\baseGeq\baseD$ and atoms $p^b$ such that if $zRv, \varphi^v\suppIML{\baseE}p^b$ for all $v$, then $\suppIML{\baseE}p^b$. Thus we conclude that $\suppIML{\baseD}p^w$ and so that $xRa,\varphi^a\suppIML{\baseC}p^w$. Therefore, we have that $xRy,\varphi^y\suppIML{\baseC}p^w$ for all $y$ from which, by the first hypothesis, we conclude that $\suppIML{\baseC}p^w$, as required.
            \end{itemize}
    \end{itemize}
    Now we consider the cases where $\gamma\neq\emptyset$. In each case, we assume that $\gamma$ contains that particular condition. We leave the proofs of all cases except for one as an exercise as they all follow exactly the same pattern. 
    \begin{itemize}[leftmargin=*]
        \item[$(R_D)$] If $xRa\suppIML{\baseB}\varphi^z$, for $a$ different to $x$, then $\suppIML{\baseB}\varphi^z$. We proceed by induction on the structure of $\varphi^z$.
        \begin{itemize}[leftmargin=*]
            \item[-] If $\varphi=p$, then we start by observing that there exists a $y$ such that $xRy\suppIML{\baseB}\varphi^z$. Thus, we are immediately done by the definition of $\deriveBaseIML{\baseB}$.
            \item[-] If $\varphi=\alpha\land\beta$, then we have that $xRa\suppIML{\baseB}\alpha^z$ and $xRa\suppIML{\baseB}\beta^z$. By the inductive hypothesis, it follows that $\suppIML{\baseB}\alpha^z$ and $\suppIML{\baseB}\beta^z$, and so we can conclude $\suppIML{\baseB}(\alpha\land\beta)^z$, as required.
            \item[-] If $\varphi= \alpha\to\beta$, then we have that $xRa\suppIML{\baseB}(\alpha\to\beta)^z$. We must show that  
            $\suppIML{\baseB}(\alpha\to\beta)^z$. With this in mind, we fix a base $\baseC\baseGeq\baseB$, where $\suppIML{\baseC}\alpha^z$, and show that $\suppIML{\baseC}\beta^z$ holds. 
            By monotonicity, we have that $xRa\suppIML{\baseC}(\alpha\to\beta)^z$, which is equivalent to having, for all $\baseD\baseGeq\baseC$ with $\suppIML{\baseD}xRa$, that $\alpha^z\suppIML{\baseD}\beta^z$. But since $\baseD\baseGeq\baseC$, where $\suppIML{\baseC}\alpha^z$, we therefore have that $\suppIML{\baseD}\beta^z$, and thus $xRa\suppIML{\baseC}\beta^z$, from which we conclude that there exists a $y$ such that $xRy\suppIML{\baseC}\beta^z$. By the inductive hypothesis, we therefore have that $\suppIML{\baseC}\beta^z$, as required. 
            \item[-] If $\varphi = \alpha\lor\beta$, then we have that $xRa\suppIML{\baseB}(\alpha\lor\beta)^z$. Given a base $\baseC\baseGeq\baseB$ and an atom $p^w$ such that $\varphi^z\suppIML{\baseC}p^w$ and $\psi^z\suppIML{\baseC}p^w$, we want to show that $\suppIML{\baseC}p^w$. 
            We start by considering that, by monotonicity, we have $xRa\suppIML{\baseC}(\alpha\lor\beta)^z$, which is equivalent to considering all bases $\baseD\baseGeq\baseC$ for which $\suppIML{\baseD}xRa$ implies $\suppIML{\baseD}(\alpha\lor\beta)^z$. The conclusion of the previous implication is equivalent to considering all bases $\baseE\baseGeq\baseD$ and atoms $q^v$, where $\varphi^z\suppIML{\baseE}q^v$ and $\psi^z\suppIML{\baseE}q^v$ implies $\suppIML{\baseE}q^v$. 
            Since we have by hypothesis that $\varphi^z\suppIML{\baseC}p^w$,  $\psi^z\suppIML{\baseC}p^w$, and $\baseE\baseGeq\baseC$, we therefore conclude that $xRa\suppIML{\baseC}p^v$, and thus that there exists a $y$ such that $xRa\suppIML{\baseC}p^v$. Then, by the definition of $\deriveBaseIML{\baseC}$, we obtain $\suppIML{\baseC}p^v$, as desired.
            \item[-] If $\varphi = \bot$, then we have that $xRa\suppIML{\baseB}\bot^z$ and we must show that $\suppIML{\baseB}\bot^z$. By definition, for each $p^w$, there exists a $y$ such that $xRy\suppIML{\baseB}p^w$ for any $p^w$, and so, by the definition of $\deriveBaseIML{\baseB}$, we obtain $\suppIML{\baseB}p^w$, for all $p^w$. Therefore $\suppIML{\baseB}\bot^z$.
            \item[-] If $\varphi = \Box\alpha$, then we have that $xRa\suppIML{\baseB}(\Box\alpha)^z$ and we must show, given a base $\baseC\baseGeq\baseB$ such that $\suppIML{\baseC}zRw$ for an arbitrary $w$, that $\suppIML{\baseC}\alpha^w$. By monotonicity, we have $xRy\suppIML{\baseC}(\Box\alpha)^z$, which is equivalent to considering all bases $\baseD\baseGeq\baseC$ for which $\suppIML{\baseD}xRy$ implies $\suppIML{\baseD}(\Box\alpha)^z$. The conclusion of the previous implication is equivalent to considering all labels $v$ such that $zRv\suppIML{\baseD}\alpha^v$. Since we have $\suppIML{\baseC}zRw$, then we have, in particular, that $\suppIML{\baseD}\alpha^v$ and thus $xRa\suppIML{\baseC}\alpha^w$. We therefore have that there exists a $y$ such that $xRa\suppIML{\baseC}\alpha^w$ from which, by the inductive hypothesis, we obtain $\suppIML{\baseC}\alpha^w$, as required.
            \item[-] If $\varphi = \Diamond\alpha$, then we have that $xRa\suppIML{\baseB}(\Diamond\alpha)^z$. To show this, we begin by fixing a base $\baseC\baseGeq\baseB$ and an atom $p^w$ such that $zRa,\alpha^a\suppIML{\baseC}p^w$ for all a, with the goal of showing that $\suppIML{\baseC}p^w$. By monotonicity, we have that $xRa\suppIML{\baseC}(\Diamond\alpha)^z$, which gives that, for all $\baseD\baseGeq\baseC$, we have that $\suppIML{\baseD}xRy$ implies, for all $\baseE\baseGeq\baseD$ and all atoms $q^v$, if $zRb,\alpha^b\suppIML{\baseE}q^v$ for all $b$, then $\suppIML{\baseE}q^v$. Since we have $zRa,\alpha^a\suppIML{\baseC}p^w$, we therefore can obtain by monotonicity that $\suppIML{\baseD}p^w$ and therefore $xRa\suppIML{\baseC}p^w$, and thus that there exists a $y$ such that $xRy\suppIML{\baseC}p^w$. Thus, by the definition of $\deriveBaseIML{\baseC}$, we conclude $\suppIML{\baseC}p^w$, as desired.
        \end{itemize}
        \item[$(R_T)$] If $xRx\suppIML{\baseB}\varphi^y$, then $\suppIML{\baseB}\varphi^y$. 
        \item[$(R_B)$] If $\suppIML{\baseB}xRy$ and $yRx\suppIML{\baseB}\varphi^z$, then $\suppIML{\baseB}\varphi^z$. 
        \item[$(R_4)$] If $\suppIML{\baseB}xRy$, $\suppIML{\baseB}yRz$, and $xRz\suppIML{\baseB}\varphi^z$, then $\suppIML{\baseB}\varphi^v$.
        \item[$(R_5)$] If $\suppIML{\baseB}xRy$, $\suppIML{\baseB}xRz$, and $yRz\suppIML{\baseB}\varphi^v$, then $\suppIML{\baseB}\varphi^v$.
        \item[$(R_2)$] If $\suppIML{\baseB}xRy$, $\suppIML{\baseB}yRz$, and $yRw,zRv\suppIML{\baseB}\varphi^w$, then $\suppIML{\baseB}\varphi^w$, for $w$ different to $x,y,z,v$.
    \end{itemize}
    This concludes the proof of soundness. \fillBox
\end{proof}

\section{Completeness} \label{sec:completeness}

We now show that, given an arbitrary valid sequent $(\Gamma:\varphi^x)$, there exists an $\rm N_{\Box\Diamond}(\gamma)$ proof of it. To show this, we will construct a special base, called $\base{N}$, whose rules will mimic the natural deduction rules of $\rm N_{\Box\Diamond}(\gamma)$, with basic sentences playing the role of (or simulating) the subformulae of the arbitrary sequent. Since the base rules can only be in terms of basic sentences and not formulae, we must be careful, as described below, about how we choose the assignment of basic sentences. 

We then show that derivations in $\base{N}$ directly correspond to natural deduction derivations of the formulae being simulated. Thus, in effect, we show that the sequent $(\Gamma:\varphi^x)$ is provable in $\rm N_{\Box\Diamond}(\gamma)$ by constructing the proof. 

To this end, we fix an arbitrary valid sequent $\mathfrak{S}=(\Gamma:\varphi^x)$ and let $\Xi$ be the set of generalized subformulae of the sequent. We define the generalized subformulae of a formula $\varphi$ as follows:
\begin{itemize}
    \item[-]if $\varphi^x$ is an atom $p^x$, then the only generalized subformula of $p^x$ is $p^x$ itself
    \item[-]if $\varphi^x$ is either $\top^x$ or $\bot^x$, then the only generalized subformula of $\varphi^x$ is $\top^x$ or $\bot^x$, respectively
    \item[-]if $\varphi^x$ is $(\alpha\circ\beta)^x$ for $\circ \in \{\land,\lor,\to\}$, then the generalized subformulae of $\varphi^x$ are $\varphi^x$, $\alpha^x$, and $\beta^x$
    \item[-]if $\varphi^x$ is $(\circ\alpha)^x$ for $\circ \in \{\Box,\Diamond\}$, then the generalized subformulae of $\varphi^x$ are $\varphi^x$, $\alpha^z$, and $xRz$, for all $z\in\Worlds$. %\yll{Write something about how this is motivated by the meta-logical reading of these connectives (i.e., $\forall x'(xRx' \supset \varphi^{x'})$ and $\exists x'(xRx' \,\&\,\varphi^{x'})$ where $\forall, \exists, \supset, \&$ are meta-logical connectives.}
\end{itemize}
Define the set $\text{Lab}(\Xi) = \{a\,|\,\xi^a\in \Xi\}\cup\{a , b \,|\,aRb\in\Xi \}$ as the set of labels of each element of $\Xi$. We define an injection $\flatIML{\cdot}:\Xi \rightarrow \Worlds\times\At$, called the flattening map, such that:
\begin{itemize}
    \item[-] It is the identity map on atoms and on $\top^x$ and $\bot^x$. 
    \item[-] For non-atomic formulae, $\alpha^x$, and relational assumptions $vRw$, it picks an $p^x$, where $p^x \notin \Xi$, and, for all $\alpha^x,\beta^y\in \Xi$, if $\alpha^x\neq\beta^y$, then $\flatIML{\alpha^x}\neq\flatIML{\beta^y}$. %That is, it is label-preserving for labelled formulae.
\end{itemize}
$\flatIML{\cdot}$ has a left inverse $\deflatILL{(\cdot)}$, defined by:
\begin{itemize}
    \item[-] The identity map on basic sentences and on $\top^x$ and $\bot^x$.
    \item[-] The original labelled formula otherwise.
\end{itemize}
These functions are defined to be distributing over sets; that is, given a set of labelled formulae $\Gamma$, we define $\Gamma^\flat = \{\flatIML{\gamma^x}\,|\, \gamma^x \in \Gamma \}$ and $\Gamma^\natural = \{\deflatIML{\gamma^x}\,|\, \gamma^x \in \Gamma \}$. 

We can now define the simulation base $\base{N}$ relative to $\Xi$ and $\flatIML{\cdot}$ according to the rules of Figure~\ref{fig:FlatNatDedIML}, where
\begin{itemize}
\item[-] $p^z$ ranges over all atoms, 
\item[-] $\varphi^x$, $\varphi^z$, and $\psi^x$ range over elements of $\Xi$,
\item[-] the formula $\varphi^y$ in the rule $(\irn\Box^\flat)$ ranges over all elements of $\Xi$ such that $y\neq x$ for each $\flatIML{(\Box\varphi)^x}$,
\item[-] the formula $\varphi^y$ in the rule $(\ern\Diamond^\flat)$ ranges over all elements of $\Xi$ such that $y\neq x$, and $y\neq z$, for each  $\flatIML{(\Diamond\varphi)^x}$ and $p^z$. 
\end{itemize}

\begin{figure}[t]
    \hrule
    \[
    \begin{array}{c@{\qquad}c} 
        % Constants
        \infer[\irn \top^\flat]{~~\flatIML{\top^x}~~}{} & \infer[\ern \bot^\flat]{p^z}{\flatIML{\bot^x}} \\[3mm]
        % Implication
        \infer[\irn \to^\flat]{\flatIML{(\varphi \to \psi)^x}}{\deduce{\flatIML{\psi^x}}{[\flatIML{\varphi^x}]}} & 
        \infer[\ern \to^\flat]{\flatIML{\psi^x}}{\flatIML{(\varphi \to \psi)^x} & \flatIML{\varphi^x}} \\[3mm]
        % Conjunction
        \infer[\irn \land^\flat]{\flatIML{(\varphi\land\psi)^x}}{\flatIML{\varphi^x} & \flatIML{\psi^x}} & 
        \infer[\ern{\land_1}^\flat]{\flatIML{\varphi^x}}{\flatIML{(\varphi \land \psi)^x}} \quad \infer[\ern{\land_2}^\flat]{\flatIML{\psi^x}}{\flatIML{(\varphi \land \psi)^x}} \\[3mm] 
        % Disjunction
        \infer[\irn{\lor_1}^\flat]{\flatIML{(\varphi\lor\psi)^x}}{\flatIML{\varphi^x}}\quad \infer[\irn{\lor_2}^\flat]{\flatIML{(\varphi\lor\psi)^x}}{\flatIML{\psi^x}} & 
        \infer[\ern \lor^\flat]{p^z}{\flatIML{(\varphi\lor\psi)^x} & \deduce{p^z}{[\flatIML{\varphi^x}]} & \deduce{p^z}{[\flatIML{\psi^x}]}}\\[3mm]
        % Box rules
        %\infer[{\flatIML{\irn \Box}}^{*}]{\flatIML{(\Box\varphi)^x}}{\deduce{\flatIML{\varphi^y}}{[xRy]}}&
        \infer[\irn \Box^\flat]{\flatIML{(\Box\varphi)^x}}{\deduce{\flatIML{\varphi^y}}{[\flatIML{xRy}]}}&
        \infer[\ern \Box^\flat]{\flatIML{\varphi^z}}{\flatIML{(\Box\varphi)^x} & \flatIML{xRz}}\\[3mm]
        % Diamond rules
        \infer[\irn \Diamond^\flat]{\flatIML{(\Diamond\varphi)^x}}{\flatIML{\varphi^z} & \flatIML{xRz}} &
        \infer[\ern \Diamond^\flat]{p^z}{\flatIML{(\Diamond\varphi)^x} & \deduce{p^z}{[\flatIML{\varphi^y}]\,[\flatIML{xRy}]}}\\[3mm]
        % Comments
        %*\text{ The label $y$ is different to $x$ and} & **\text{ The label $y$ is different to $x$ and $z$}\\
        %\text{the labels of any open assumptions.} & \text{and the label of any open assumptions.}
    \end{array}
    \]
    \caption{Simulation base $\base{N}$} \vspace{2mm}
    \hrule
    \label{fig:FlatNatDedIML}
\end{figure}

\begin{figure}[ht]
    \hrule
    \[
    \begin{array}{c@{\qquad}c}
        % D and T
        \infer[\flatIML{R_D}]{p^a}{\deduce{p^a}{[\flatIML{xRy}]}} & 
        %\infer[\flatIML{R_D}^*]{\varphi^z}{\deduce{\varphi^z}{[xRy]}} & 
        \infer[\flatIML{R_T}]{p^a}{\deduce{p^a}{[\flatIML{xRx}]}} \\[3mm]
        % B and 4
        \infer[\flatIML{R_B}]{p^a}{\flatIML{xRz} & \deduce{p^a}{[\flatIML{zRx}]}} & 
        \infer[\flatIML{R_4}]{p^a}{\flatIML{xRw} & \flatIML{wRz} & \deduce{p^a}{[\flatIML{xRz}]}} \\[3mm] 
        % 5 and 2
        \infer[\flatIML{R_5}]{p^a}{\flatIML{xRw} & \flatIML{xRz} & \deduce{p^a}{[\flatIML{wRz}]}} &
        \infer[\flatIML{R_2}]{p^a}{\flatIML{xRw} & \flatIML{xRz} & \deduce{p^a}{[\flatIML{wRy}]\,[\flatIML{zRy}]}} \\[3mm] 
        %\infer[\flatIML{R_2}^{**}]{\varphi^v}{xRy & xRz & \deduce{\varphi^v}{[yRw]\,[zRw]}} \\[3mm] 
        % Comments
        %*\text{ The label $y$ is different to $x$ and} & **\text{ The label $w$ is different to $v,x,y,z$}\\
        %\text{the labels of any open assumptions.} & \text{ and the labels of any open assumptions.}
    \end{array}
    \] \vspace{-4mm}
    \caption{Rules which extend $\base{N}$ to express the properties of the relational assumptions} \vspace{2mm}
    \hrule
    \label{fig:FlatNatDedProperties}
\end{figure}

If $\gamma\neq\emptyset$, we must also add rules to $\base{N}$ to express the frame conditions. We do this by adding the rules $\flatIML{R_i}$ from Figure~\ref{fig:FlatNatDedProperties} to $\base{N}$, for each axiom $\gamma_i\in\gamma$ as follows:
\begin{itemize}
    \item If the rule does not contain a relation with the letter $y$, then we add to $\base{N}$ the rule concluding each $p^a$, where all labels range over $\text{Lab}(\Xi)$.
    \item If the rule contains a relation with the letter $y$, then we add to $\base{N}$ the rule concluding each $p^a$, where the label $y$ ranges over all $\Worlds$ and all other labels range over $\text{Lab}(\Xi)$ with the condition that $y$ is never equal to any of the other labels in that rule, that is to say, $y\neq x,a$ in the case of $\flatIML{R_D}$ and $y\neq x,z,v,w,a$ in the case of $\flatIML{R_2}$.
\end{itemize}

We prove completeness, making use of two lemmas that we will prove later in this section.

\begin{theorem}[Completeness]
    If $\Gamma\suppM{}{\,\gamma}\varphi^x$, then $\Gamma\vdash^{\gamma}\varphi^x$.
\end{theorem}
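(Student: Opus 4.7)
The plan is to follow the template of Sandqvist's completeness proof for \IPL{} \cite{Tor2015}, adapted to the labelled setting, with the simulation base $\base{N}$ already set up. Everything will hinge on two preparatory lemmas about $\base{N}$, after which the completeness theorem follows by routine chaining.

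First I would prove a \emph{flattening lemma} saying that atomic derivability in $\base{N}$ faithfully mirrors derivability in $\rm N_{\Box\Diamond}(\gamma)$: for any set $S$ of basic sentences and any basic sentence $s$ whose unflattenings lie in $\Xi$, $S\deriveBaseIML{\base{N}}s$ iff $\deflatIML{S}\vdash^\gamma \deflatIML{s}$. This is a direct structural induction in both directions, since each basic rule of Figure~\ref{fig:FlatNatDedIML} (and Figure~\ref{fig:FlatNatDedProperties} when $\gamma\neq\emptyset$) is a literal flattening of the corresponding rule of Figures~\ref{fig:NatDedIML} and~\ref{fig:NatDedProperties}, and the side conditions on $\base{N}$ (fresh labels, quantification of $y$ over all worlds in $(\irn\Box^\flat)$, $(\ern\Diamond^\flat)$, $\flatIML{R_D}$, $\flatIML{R_2}$) mirror the eigenvariable conditions of the natural deduction rules.

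Second, and this is the heart of the argument, I would prove a \emph{key lemma}: for every $\psi^y\in\Xi$ and every $\baseC\baseGeq\base{N}$, $\suppIML{\baseC}\psi^y$ iff $\deriveBaseIML{\baseC}\flatIML{\psi^y}$. The proof is by induction on the structure of $\psi^y$. Atoms and relational assumptions are immediate from (At), (Rel), and Lemma~\ref{lem:basicInf}. The cases for $\top,\land,\to$ use the corresponding $\base{N}$-rules straightforwardly. For $\bot$, the rule $\ern\bot^\flat$ ensures that $\deriveBaseIML{\baseC}\flatIML{\bot^x}$ makes every atom derivable, matching the semantic clause. For $\lor$, I would recycle Sandqvist's disjunction argument with the help of Lemma~\ref{lem:basicInf}, using that $\ern\lor^\flat$ quantifies over all atoms $p^z$. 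For $\Box$, the clause $(\Box)$ universally quantifies over labels $y$; the right-to-left direction uses the rule $\ern\Box^\flat$, while the left-to-right direction invokes $\irn\Box^\flat$ together with the inductive hypothesis applied uniformly to each element of $\Xi$ that lies under $\Box$. The $\Diamond$ case is, as the authors foreshadow, an infinitary analogue of the disjunction case: one combines $\irn\Diamond^\flat$ and $\ern\Diamond^\flat$ together with the universal quantification in the clause $(\Diamond)$ and over the label $y$ in the $\base{N}$-rule, again modulo the appropriate freshness conditions. If $\gamma\neq\emptyset$, each frame-condition clause of $\deriveBaseIML{\baseC}$ matches one of the rules $\flatIML{R_D},\ldots,\flatIML{R_2}$ of Figure~\ref{fig:FlatNatDedProperties}, so those cases reduce to the atomic case by Lemma~\ref{lem:basicInf}. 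I expect the $\Diamond$ case, and in particular synchronising the freshness side-conditions of $\ern\Diamond^\flat$ and the frame-condition rules with those of the modal cases of $\deriveBaseIML{\baseC}$, to be the main technical obstacle.

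Finally, to prove the theorem itself I would assume $\Gamma\suppM{}{\gamma}\varphi^x$, specialise validity to the base $\base{N}_\Gamma:=\base{N}\cup\{\seq\flatIML{\psi^y}\mid \psi^y\in\Gamma\}$ via monotonicity, and apply (Inf) to $\Gamma\suppIML{\base{N}_\Gamma}\varphi^x$. Each $\flatIML{\psi^y}$ is derivable in $\base{N}_\Gamma$ by (App) using the new atomic rules, hence $\suppIML{\base{N}_\Gamma}\flatIML{\psi^y}$ by (At); applying the key lemma right-to-left (noting each $\psi^y\in\Xi$) yields $\suppIML{\base{N}_\Gamma}\psi^y$ for every $\psi^y\in\Gamma$. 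Therefore $\suppIML{\base{N}_\Gamma}\varphi^x$, and the key lemma applied left-to-right gives $\deriveBaseIML{\base{N}_\Gamma}\flatIML{\varphi^x}$. Finally the flattening lemma translates this derivation back into a derivation of $\varphi^x$ from $\Gamma$ in $\rm N_{\Box\Diamond}(\gamma)$, that is, $\Gamma\vdash^\gamma\varphi^x$, as required.
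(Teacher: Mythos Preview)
Your proposal is correct and follows essentially the same route as the paper: your ``key lemma'' is the paper's Lemma~\ref{lem:compFlattening} (combined with (At)), your ``flattening lemma'' is Lemma~\ref{lem:compNat}, and your final chaining differs only cosmetically in that you pass to the extended base $\base{N}_\Gamma$ explicitly whereas the paper stays at $\base{N}$ and invokes Lemma~\ref{lem:basicInf} to push $\Gamma^\flat$ into the antecedent of $\deriveBaseIML{\base{N}}$. One small remark: only the left-to-right direction of your flattening lemma is needed (and is all the paper proves); the converse would implicitly rely on a subformula property/normalization for $\rm N_{\Box\Diamond}(\gamma)$, so you may as well drop it.
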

\begin{proof}
    Let $\flatIML{\cdot}$ be a flattening map with $\deflatIML{\cdot}$ being its corresponding left inverse and $\base{N}$ be a simulation base the sequent $(\Gamma:\varphi^x)$. We start by considering $\Gamma\suppM{}{\gamma}\varphi^x$, which, since it is valid, implies in particular that $\Gamma\suppIML{\base{N}}\varphi^x$ holds. By Lemma~\ref{lem:compFlattening}, this is equivalent to $\Gamma^\flat\suppIML{\base{N}}\flatIML{\varphi^x}$. By the (At) clause of the definition of $\suppIML{\base{N}}$ and Lemma~\ref{lem:basicInf}, we have that this is equivalent to $\flatIML{\Gamma}\deriveBaseIML{\base{N}}\flatIML{\varphi^x}$. Finally, by Lemma~\ref{lem:compNat}, this implies that $\deflatIML{\Gamma^\flat}\vdash^{\!\!\gamma}\deflatIML{\flatIML{\varphi^x}}$, which is $\Gamma\vdash^{\!\!\gamma}\varphi^x$, as desired. \fillBox
\end{proof}

\begin{proposition}\label{prop:compFlatLemma}
The following hold for all $\baseB\baseGeq\base{N}$:
    \begin{enumerate}
        \item[-]$\deriveBaseIML{\baseB}\flatIML{(\varphi\land\psi)^x}$ iff $\deriveBaseIML{\baseB}\flatIML{\varphi^x}$ and $\deriveBaseIML{\baseB}\flatIML{\psi^x}$
        \item[-]$\deriveBaseIML{\baseB}\flatIML{(\varphi\lor\psi)^x}$ iff for all $\baseC\baseGeq\baseB$ and atoms $p^z$ if $\varphi^x\suppIML{\baseC}p^z$ and $\psi^x\suppIML{\baseC}p^z$, then $\suppIML{\baseC}p^z$.
        \item[-]$\deriveBaseIML{\baseB}\flatIML{\bot^x}$ iff $\deriveBaseIML{\baseB}p^z$, for all atoms $p^z$.
        \item[-]$\deriveBaseIML{\baseB}\flatIML{(\Box\varphi)^x}$ iff for all labels $y$, we have $\flatIML{xRy} \deriveBaseIML{\baseB}\flatIML{\varphi^y}$.
        \item[-]$\deriveBaseIML{\baseB}\flatIML{(\Diamond\varphi)^x}$ iff for all $\baseC\baseGeq\baseB$ and atoms $p^z$, if $\flatIML{xRy},\flatIML{\varphi^y}\deriveBaseIML{\baseC}p^z$ for all $y$, then $\deriveBaseIML{\baseC}p^z$.
    \end{enumerate}
\end{proposition}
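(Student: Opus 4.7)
The plan is to prove the five biconditionals separately by a case analysis on the connective, each time exploiting the direct correspondence between the rules of the simulation base $\base{N}$ and the support clauses of Definition~\ref{def:support}. For every connective, one direction of the biconditional is obtained by applying (App) to an introduction-style rule of $\base{N}$, and the other by applying (App) to the corresponding elimination-style rule; throughout, the main tools are Lemma~\ref{lem:basicInf}, the (At) clause $\suppIML{\baseB} p^z \iff \deriveBaseIML{\baseB} p^z$, monotonicity, and (Ref).

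The three routine cases are $\land$, $\bot$, and $\Box$. For $\land$, the left-to-right direction applies (App) to each of $\ern\land_1^\flat$ and $\ern\land_2^\flat$, while right-to-left applies (App) to $\irn\land^\flat$. For $\bot$, left-to-right applies (App) to $\ern\bot^\flat$ for an arbitrary atom $p^z$, and right-to-left specialises the universal hypothesis to $p^z = \flatIML{\bot^x}$, treated as a basic sentence per the paper's convention. For $\Box$, left-to-right applies (App) to $\ern\Box^\flat$, supplying $\flatIML{xRy}$ via (Ref); right-to-left applies (App) to $\irn\Box^\flat$, the side-condition on $y$ being respected by the way $\irn\Box^\flat$ is set up in $\base{N}$.

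The interesting cases are $\lor$ and $\Diamond$, because the right-hand sides mention the support relation at formulae rather than bare derivability at atoms. For the forward direction of $\lor$, I would fix $\baseC \supseteq \baseB$ and an atom $p^z$ with $\varphi^x \suppIML{\baseC} p^z$ and $\psi^x \suppIML{\baseC} p^z$, and reduce $\suppIML{\baseC} p^z$ to $\deriveBaseIML{\baseC} p^z$ via (At). One then applies (App) to $\ern\lor^\flat$, leaving obligations of the form $\flatIML{\varphi^x} \deriveBaseIML{\baseC} p^z$ and $\flatIML{\psi^x} \deriveBaseIML{\baseC} p^z$. These are extracted from the support hypotheses by passing, via Lemma~\ref{lem:basicInf}, to further extensions $\baseD \supseteq \baseC$ in which $\flatIML{\varphi^x}$ (respectively $\flatIML{\psi^x}$) is derivable, and invoking a support-to-derivability transfer at $\varphi^x$ and $\psi^x$. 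For the backward direction, specialising $p^z$ to $\flatIML{(\varphi\lor\psi)^x}$ reduces the goal to $\deriveBaseIML{\baseB} \flatIML{(\varphi\lor\psi)^x}$; the two support premises $\varphi^x \suppIML{\baseB} \flatIML{(\varphi\lor\psi)^x}$ and $\psi^x \suppIML{\baseB} \flatIML{(\varphi\lor\psi)^x}$ are verified via (Inf) using $\irn\lor_1^\flat$ and $\irn\lor_2^\flat$. The $\Diamond$ case follows the same template with $\ern\Diamond^\flat$, $\irn\Diamond^\flat$, and an extra universal quantification over labels $y$ (respecting that $y$ differs from $x$ and $z$). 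The principal obstacle will be the support-to-derivability transfer used in the forward directions of $\lor$ and $\Diamond$; I would isolate it as a small auxiliary claim of the form \emph{$\suppIML{\baseB} \varphi^x$ implies $\deriveBaseIML{\baseB} \flatIML{\varphi^x}$ for all $\baseB \supseteq \base{N}$}, proven by a separate induction on $\varphi^x$, and which would also feed into Lemma~\ref{lem:compFlattening}.
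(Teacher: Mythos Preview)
Your overall plan---one direction by (App) on the intro rule, the other by (App) on the elim rule, with Lemma~\ref{lem:basicInf}, (At), (Ref), and monotonicity as glue---is exactly the paper's. The routine cases are fine.

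There is one misreading. You say the right-hand sides of both the $\lor$ and the $\Diamond$ clauses mention support at formulae, but re-read the $\Diamond$ clause: it is phrased entirely in terms of $\flatIML{xRy},\flatIML{\varphi^y}\deriveBaseIML{\baseC}p^z$ and $\deriveBaseIML{\baseC}p^z$, i.e.\ derivability at flattened atoms. So for $\Diamond$ there is no ``support-to-derivability transfer'' to perform at all; the forward direction is literally (App) on $\ern\Diamond^\flat$ after picking some $y\neq x,z$, and the backward direction is the specialisation $p^z=\flatIML{(\Diamond\varphi)^x}$ together with $\irn\Diamond^\flat$. That is precisely how the paper argues.

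For $\lor$ you have spotted a genuine wrinkle in the statement as written, but your proposed fix does not close it. Your auxiliary claim is the single implication $\suppIML{\baseB}\varphi^x \Rightarrow \deriveBaseIML{\baseB}\flatIML{\varphi^x}$. Unwinding what you actually need: from $\varphi^x\suppIML{\baseC}p^z$ you want $\flatIML{\varphi^x}\deriveBaseIML{\baseC}p^z$. By (Inf) and Lemma~\ref{lem:basicInf} this reduces to showing, for every $\baseD\supseteq\baseC$, that $\deriveBaseIML{\baseD}\flatIML{\varphi^x}$ implies $\suppIML{\baseD}\varphi^x$---the \emph{opposite} direction of your auxiliary claim. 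So a one-way claim is not enough; you need the full biconditional, which is exactly Lemma~\ref{lem:compFlattening}, and that lemma is proved \emph{from} this proposition, so you would be arguing in a circle. The paper avoids the issue by writing the forward $\lor$ direction as ``immediately by (App) applied to the $\ern\lor^\flat$ rule'', i.e.\ it silently treats the $\lor$ clause as if it were formulated with $\flatIML{\varphi^x}$ and $\deriveBaseIML{}$, in parallel with the $\Diamond$ clause; under that reading the argument really is one line, and the passage to support at $\varphi^x$ is deferred to the inductive step inside Lemma~\ref{lem:compFlattening}. If you want to keep the clause exactly as stated, the clean route is to prove Proposition~\ref{prop:compFlatLemma} and Lemma~\ref{lem:compFlattening} by a single simultaneous induction on $\varphi$, rather than introducing a separate auxiliary claim.
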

\begin{proof}
We take each case in turn:
    \begin{enumerate}
        \item[-] Left to right. From the hypothesis $\deriveBaseIML{\baseB}\flatIML{(\varphi\land\psi)^x}$, we have, by (App) applied to the rules $(\ern{\land_1}^\flat)$ and $(\ern{\land_2}^\flat)$, that $\deriveBaseIML{\baseB}\flatIML{\varphi^x}$ and $\deriveBaseIML{\baseB}\flatIML{\psi^x}$. 

        \medskip 
        
        Right to left. From the hypotheses $\deriveBaseIML{\baseB}\flatIML{\varphi^x}$ and $\deriveBaseIML{\baseB}\flatIML{\psi^x}$, we have again, by (App) applied to the rule $(\irn\land^\flat)$, that $\deriveBaseIML{\baseB}\flatIML{(\varphi\land\psi)^x}$.\\
        \item[-] Left to right. We obtain the conclusion immediately by (App) applied to the $(\ern \lor^\flat)$ rule. 

        \medskip 
        
        Right to left. Consider the bases $\baseC=\baseB$, and the case when $p^z = \flatIML{(\varphi\lor\psi)^x}$. In this case, we have sufficient grounds, by (App) applied to either the $(\irn{\lor_1}^\flat)$ rule or the $(\irn{\lor_2}^\flat)$ rule, to obtain $\deriveBaseIML{\baseB}\flatIML{(\varphi\lor\psi)^x}$, as required. \\
        \item[-] This case is immediate. \\
        \item[-] Left to right. We must show, given any label $z$, that $\flatIML{xRz} \deriveBaseIML{\baseB}\flatIML{\varphi^z}$ holds. To this end, we consider all bases $\baseC\baseGeq\baseB$, where $\deriveBaseIML{\baseC}\flatIML{xRz}$, and show that $\deriveBaseIML{\baseC}\flatIML{\varphi^z}$. Since we have $\deriveBaseIML{\baseB}\flatIML{(\Box\varphi)^x}$, by monotonicity, we have that $\deriveBaseIML{\baseC}\flatIML{(\Box\varphi)^x}$. Therefore, by (App) applied to the $(\ern \Box^\flat)$ rule, we obtain $\deriveBaseIML{\baseB}\flatIML{\varphi^z}$. 
        
        \medskip
        
        Right to left. We have that, for all labels $y$, $\flatIML{xRy} \deriveBaseIML{\baseB}\flatIML{\varphi^y}$. Consider a label $y$ that is different to $x$. Then, by (App) applied to the $(\irn \Box^\flat)$ rule, we obtain $\deriveBaseIML{\baseB}\flatIML{(\Box\varphi)^x}$.\\
        \item[-] Left to right. Start by fixing an arbitrary base $\baseC\baseGeq\baseB$ and atom $p^z$ such that $\flatIML{xRy},\flatIML{\varphi^y}\deriveBaseIML{\baseB}p^z$ for all $y$. Our goal will be to show that $\deriveBaseIML{\base{N}}p^z$. To this end, we note that, as we have $\flatIML{xRy},\flatIML{\varphi^y}\deriveBaseIML{\baseB}p^z$ for all $y$, we can pick some $y=a$ where $a\neq x,z$ to obtain $\flatIML{xRa},\flatIML{\varphi^a}\deriveBaseIML{\baseB}p^z$. Furthermore, by monotonicity, we also have that $\deriveBaseIML{\baseC}\flatIML{(\Diamond\varphi)^x}$. 
        Therefore, by (App) applied to the appropriate $(\ern \Diamond^\flat)$ rule which concludes $p^z$ (since there is one for each $y$ not equal to $x$ or $z$), we obtain $\deriveBaseIML{\baseC}p^z$, as required. 

        \medskip
        
        Right to left. The hypothesis gives that for all $\baseC\baseGeq\baseB$ and atoms $p^z$, if $\flatIML{xRy},\flatIML{\varphi^y}\deriveBaseIML{\baseC}p^z$ for all $y$, then $\deriveBaseIML{\baseC}p^z$. We start by considering $\baseC=\baseB$ and $p^z=\flatIML{(\Diamond\varphi)^x}$. Given an arbitrary variable $a$, we know that $\flatIML{xRa}, \flatIML{\varphi^a}\deriveBaseIML{\baseB}\flatIML{\varphi^a}$ and $\flatIML{xRa}, \flatIML{\varphi^a}\deriveBaseIML{\baseB}\flatIML{xRa}$ both hold by (Ref). %Furthermore, it is important to note that $\flatIML{xRa}$ and $\flatIML{\varphi^a}$ are defined for any variable $a$ as they are all generalized subformulae of $\flatIML{(\Diamond\varphi)^x}$. 
        Therefore, by (App) applied to the appropriate $(\irn \Diamond^\flat)$ rule, we have $\flatIML{xRa},\flatIML{\varphi^a}\deriveBaseIML{\baseB}\flatIML{(\Diamond\varphi)^x}$ for arbitrary $a$. Thus, $\flatIML{xRy},\flatIML{\varphi^y}\deriveBaseIML{\baseB}\flatIML{(\Diamond\varphi)^x}$ holds for all $y$, from which we conclude, by the assumed implication, that $\deriveBaseIML{\baseB}\flatIML{(\Diamond\varphi)^x}$. \fillBox 
    \end{enumerate}
\end{proof}

\begin{lemma}\label{lem:compFlattening}
    $\suppIML{\baseB}\varphi^x$ if and only if $\suppIML{\baseB}\flatIML{\varphi^x}$.
\end{lemma}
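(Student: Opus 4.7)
The proof is by induction on the structure of $\varphi$, with $\baseB\baseGeq\base{N}$ understood implicitly so that the simulation rules of Figure~\ref{fig:FlatNatDedIML} are present. The base cases---when $\varphi$ is a propositional atom, $\top$, or $\bot$---are immediate because $\flatIML{\cdot}$ acts as the identity on these formulae, so the two sides of the biconditional coincide literally.

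For each compound $\varphi$, the strategy is a three-step chase. First, I would unfold $\suppIML{\baseB}\varphi^x$ using the defining clause of Definition~\ref{def:support}, expanding any nested $\suppIML{\baseC}(\cdot)$ through (Inf), (Rel), and (At). Second, I would apply the inductive hypothesis to each strict subformula $\alpha^y$ that appears, passing from $\suppIML{\baseC}\alpha^y$ to $\suppIML{\baseC}\flatIML{\alpha^y}$; since $\flatIML{\alpha^y}$ is then atomic, the (At) clause further rewrites this as $\deriveBaseIML{\baseC}\flatIML{\alpha^y}$. Third, I would recognise the resulting condition as precisely one of the characterisations of $\deriveBaseIML{\baseB}\flatIML{\varphi^x}$ supplied by Proposition~\ref{prop:compFlatLemma}, and close the chain with a final use of (At), which is valid because $\flatIML{\varphi^x}$ is a fresh atom whenever $\varphi$ is compound. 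The connectives $\land$, $\lor$, $\Box$, and $\Diamond$ plug directly into clauses~(1), (2), (4), and~(5) of the Proposition, respectively.

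The connective $\to$ is the only one not supplied by the Proposition, and is handled in place: unfolding via $(\to)$ and (Inf) and applying the inductive hypothesis together with Lemma~\ref{lem:basicInf} reduces the problem to showing $\flatIML{\alpha^x}\deriveBaseIML{\baseB}\flatIML{\beta^x}$ iff $\deriveBaseIML{\baseB}\flatIML{(\alpha\to\beta)^x}$. The left-to-right direction follows from one application of $\irn\to^\flat$; the right-to-left direction follows from weakening (Lemma~\ref{lem:basicWeakening}), (Ref), and one application of $\ern\to^\flat$.

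The main obstacle will be the $\Diamond$ case. Both the semantic clause $(\Diamond)$ and clause~(5) of Proposition~\ref{prop:compFlatLemma} are quantifier-heavy, so the chase must carefully align the nested universal quantifications over base extensions $\baseC\baseGeq\baseB$, witness labels $y$, and target atoms $p^z$. A subtler point, common also to $(\Box)$, is that the semantic clauses mention the relational assumption $xRy$ and invoke (Rel), whereas Proposition~\ref{prop:compFlatLemma} mentions the flattened atom $\flatIML{xRy}$; since the role of $\flatIML{xRy}$ in $\base{N}$ is designed precisely to simulate $xRy$, the correspondence should go through, but the argument must take care in the meta-level reasoning not to conflate the two.
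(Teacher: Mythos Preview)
Your proposal is correct and follows essentially the same approach as the paper: structural induction on $\varphi$, with Proposition~\ref{prop:compFlatLemma} supplying the bridge between the unfolded support clause and $\deriveBaseIML{\baseB}\flatIML{\varphi^x}$ at each compound connective. The paper only writes out the $\land$ case and asserts that ``all other cases follow similarly,'' whereas you go further by (i) noting explicitly that $\to$ is absent from Proposition~\ref{prop:compFlatLemma} and supplying the missing argument via $\irn\to^\flat$/$\ern\to^\flat$ and Lemma~\ref{lem:basicInf}, and (ii) flagging the $xRy$ versus $\flatIML{xRy}$ mismatch in the $\Box$ and $\Diamond$ cases, a genuine subtlety that the paper's sketch does not confront.
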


\begin{proof}
We proceed by induction on the structure of $\varphi^x$. We give just one case as all other cases follow similarly. Consider the case in which $\varphi=\alpha\land\beta$. We have, by the definition of $(\land)$, that $\suppIML{\baseB}(\alpha\land\beta)^x$ if and only if $\suppIML{\baseB}\alpha^x$ and $\suppIML{\baseB}\beta^x$. By the inductive hypothesis, we have that this holds if and only if $\deriveBaseIML{\baseB}\flatIML{\alpha^x}$ and $\deriveBaseIML{\baseB}\flatIML{\beta^x}$. Thus, by Proposition~\ref{prop:compFlatLemma}, we know holds if and only if $\deriveBaseIML{\baseB}\flatIML{(\alpha\land\beta)^x}$, which by (At) holds if and only if $\suppIML{\baseB}\flatIML{(\alpha\land\beta)^x}$, as required. \fillBox
\end{proof}

\begin{lemma}\label{lem:compNat}
    If $L\deriveBaseIML{\base{N}}p^x$ then $L^\natural\vdash^{\!\!\gamma}\deflatIML{p^x}$, for any $L\subseteq\At$ and $p\in\At$.
\end{lemma}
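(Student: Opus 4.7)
The plan is to proceed by induction on the structure of the basic derivation $L \deriveBaseIML{\base{N}} p^x$, translating each derivation step into the corresponding step of a natural deduction proof of $L^\natural \vdash^\gamma \deflatIML{p^x}$. This translation is possible because, by construction, each rule of $\base{N}$ is the $\flatIML{\cdot}$-image of a rule of $\rm N_{\Box\Diamond}(\gamma)$, and $\deflatIML{\cdot}$ is a left inverse to $\flatIML{\cdot}$ that restores each atomic abbreviation such as $\flatIML{\varphi^x}$ or $\flatIML{xRy}$ to the original labelled formula or relational assumption.

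For the base case \textbf{(Ref)} with $p^x \in L$, we immediately have $\deflatIML{p^x} \in L^\natural$ and hence $L^\natural \vdash^\gamma \deflatIML{p^x}$ by the axiom rule $({\rm R})$. For the inductive case \textbf{(App)}, we do a case analysis on the rule of $\base{N}$ applied. Each propositional rule of Figure~\ref{fig:FlatNatDedIML} corresponds to a rule of Figure~\ref{fig:NatDedIML}: applying the inductive hypothesis to the atomic premises yields natural deduction derivations of the translated premises, and one application of the corresponding natural deduction rule gives the target derivation. For example, if the final step is $(\irn\land^\flat)$ with premises $L \deriveBaseIML{\base{N}} \flatIML{\varphi^x}$ and $L \deriveBaseIML{\base{N}} \flatIML{\psi^x}$, the inductive hypothesis gives $L^\natural \vdash^\gamma \varphi^x$ and $L^\natural \vdash^\gamma \psi^x$; one application of $(\irn\land)$ then yields $L^\natural \vdash^\gamma (\varphi \land \psi)^x$, which is exactly $\deflatIML{\flatIML{(\varphi \land \psi)^x}}$. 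The remaining propositional rules, the modal rules $(\ern\Box^\flat), (\irn\Diamond^\flat), (\irn\Box^\flat), (\ern\Diamond^\flat)$, and the six modal closure clauses \textbf{(D), (T), (B), (4), (5), (2)} (each of which corresponds directly to a rule in Figure~\ref{fig:NatDedProperties}) are handled in the same pattern, with discharged atomic hypotheses of the form $\flatIML{xRy}$ interpreted under $\deflatIML{\cdot}$ as the relational assumption $xRy$, and with $L^\natural \vdash^\gamma xRy$ obtained from $L \deriveBaseIML{\base{N}} \flatIML{xRy}$ by the inductive hypothesis.

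The main obstacle will be handling the freshness side conditions that appear in the natural deduction rules $\irn\Box$, $\ern\Diamond$, $R_D$, and $R_2$, and correspondingly in the basic derivability clauses \textbf{(D)} and \textbf{(2)} and in the simulation rules $(\irn\Box^\flat)$ and $(\ern\Diamond^\flat)$ of $\base{N}$. In $\base{N}$ the bound label is only required to differ from a short list of labels, whereas the natural deduction rule additionally demands that it not occur in any open assumption. Since each basic derivation is finite and $L^\natural$ contains only finitely many labels, a standard labelled alpha-renaming (substitution) lemma for $\rm N_{\Box\Diamond}(\gamma)$ allows us, if necessary, to rename the bound label throughout the translated derivation to a genuinely fresh one before applying the corresponding natural deduction rule. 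With this renaming in place every side condition is satisfied and the induction closes, giving $L^\natural \vdash^\gamma \deflatIML{p^x}$ as required.
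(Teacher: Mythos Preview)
Your proposal is correct and follows the same inductive strategy as the paper. The only differences are presentational: you translate the modal derivability clauses (D)--(2) directly to the corresponding $R_i$-rules of $\rm N_{\Box\Diamond}(\gamma)$, whereas the paper first absorbs them into the (App) case via the flattened frame rules $\flatIML{R_i}$ already present in $\base{N}$; and your explicit appeal to an alpha-renaming lemma for the freshness side-conditions of $\irn\Box$, $\ern\Diamond$, $R_D$, $R_2$ sharpens what the paper addresses only in an informal remark following the proof.
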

\begin{proof}
    We consider how $L\deriveBaseIML{\base{N}}p^x$ obtains. If it obtains by (Ref), then it is the case that $p^x\in L$, and so we immediately have that $L^\natural\vdash^{\!\!\gamma}\deflatIML{p^x}$. Else, it must obtain by (App) or a modal case. In the case of (App), we argue by induction on the structure of the derivation which concluded it, taking the previous case as the base case.
    We thus have, by the inductive hypothesis, that if $((P_1\Rightarrow p_1,\dots, P_n\Rightarrow p_n)\Rightarrow r) \in \base{N}$ and for each $i$ that  $S^\natural,P_i^\natural\vdash^{\!\!\gamma}p_i^\natural$, then $S^\natural\vdash^{\!\!\gamma}r^\natural$. By letting the rule $((P_1\Rightarrow p_1,\dots, P_n\Rightarrow p_n)\Rightarrow r)$ range over all the rules of $\base{N}$ and noting that $\deflatIML{\flatIML{\varphi^x}}=\varphi^x$, whenever $\flatIML{\varphi^x}$ is defined, we see that we indeed to recover the rules for $N_{\Box\Diamond}(\gamma)$ and thus have the required deduction. In the modal cases, we observe that we always have rules in the base which, in effect, make the use of the modal cases of the derivability relation redundant. That is to say, in $\base{N}$ we can always replace an instance of an application of a ``modal case" in a derivation, with an instance of an application of (App) using a rule in $\base{N}$, at which point, we argue as in the (App) case. However, we must be careful in the modal cases $\rm(D)$ and $\rm(2)$. In these two cases, the fact that we have by hypothesis that there exists a $y$ ($w$ respectfully), allows us to instantiate the existential with a variable not appearing in the anywhere in the hypothesis. This, combined with the way in which we have constructed $\base{N}$, guarantees the existence of an appropriate rule in $\base{N}$ to allow us to argue as in the other modal cases. \fillBox
\end{proof}

A question to ask is whether in the cases of $\irn \Box$ and $\ern \Diamond$, the side-conditions are actually being adhered to? This is indeed the case because of the way in which $\base{N}$ is defined. 
Recall that the schemas $\irn \Box$ and $\ern\Diamond$ are simulated by ensuring that the base $\base{N}$ contains an instance of the rule for every possible allowed combination of labels. That this always guarantees the existence of the rule with the right kind of label is because of the way we add rules to the base. Recall that if a sequent contains a modal formula, say $(\Box p)^x$, then the set of generalized subformulae $\Xi$ will contain not only $(\Box p)^x$, but also $p^y$ and $xRy$, for all $y\in\Worlds$. Thus, the only rules in $\base{N}$ that allow introduction of $\flatIML{(\Box p)^x}$ are those that satisfy the side-conditions. Similar reasoning holds for $\ern\Diamond$. Therefore, when translating our proofs in $\base{N}$ under $\deflatILL{(\cdot)}$, we always obtain a valid $\rm N_{\Box\Diamond}(\gamma)$ proof.

\section{Conclusion} \label{sec:conclusion}

We have provided an inferentialist interpretation of a core family of intuitionistic modal logics, as defined by Simpson \cite{simpson1994Thesis}, through a base-extension semantics that uniformly and conservatively extends Sandqvist's 
base-extension semantics for IPL. Soundness and completeness properties have been established. This work makes essential use of the idea of labelling in proof systems that is familiar in work on tableaux systems and sequent calculi. 

The approach here stands in contrast to the work of Eckhardt and Pym on proof-theoretic semantics for classical modal logics \cite{EP1,EP2} in which `modal relations' are imposed on bases. 
Further work to understand the relationships between the two approaches would be valuable.

% \begin{itemize}
% \item[-]Summary of contribution. 
% \item[-]Alternative handling of relational structure. 
% \item[-]Future work: classical, in contrast to EP's handling of 
% relational structure. 
% \item[-]S\&C w.r.t. geometric theories.
% \end{itemize}

Base-extension semantics has also been developed for substructural logics, including intuitionistic multiplicative linear logic \cite{AlexTaoDavid_PtS4IMLL,GGP2024IMLL}, 
linear logic with additives and exponentials \cite{buzoku2025ILL}, and the bunched logic BI \cite{AlexTaoDavid_PtS4BI}.  

These logics typically come along with a `resource interpretation' or `resource semantics'. For example, in linear logic, propositions occurring in proofs can be interpreted as resources that can be consumed in the construction described by proof. Alternatively, in the logic of bunched implications (BI) \cite{GPSemBI2023,GGP2024}, a resource semantics resides in its Kripke-style models, with the intuitionistic connectives characterizing the sharing of resources and the multiplicative connectives characterizing the separation of resources. BI's resource semantics gives rise to Separation Logic, which is significant in program analysis and verification. 

Gheorghiu, Gu, and Pym, \cite{GGP2024}, have shown that BI's base-extension semantics provides a unifying framework for the resource readings described above. Furthermore, they show how base-extension semantics can be deployed as a system-modelling technology. Such a technology would be very much enriched by the inclusion of modal operators of the kind treated here and in \cite{EP1,EP2}. The use of base-extension semantics in system modelling appears to be a promising 
line of enquiry. This suggests that it would be worthwhile to explore 
automated reasoning tools for base-extension semantics for modal and substructural logics, though such a programme is beyond the scope of this paper.

\bibliographystyle{splncs04}
\bibliography{biblio}% common bib file

\end{document}